\documentclass[twoside,11pt]{article}
\usepackage{xstring}
\usepackage[english]{babel}
\usepackage{amsmath}
\usepackage{amsthm}
\usepackage{cases}
\usepackage{amsfonts}
\usepackage{amssymb}
\usepackage{graphicx}
\usepackage{colortbl,dcolumn}
\usepackage{float}
\usepackage{paralist}  % generate "compactenum"
\allowdisplaybreaks[4]

%\textheight=130truemm
\textheight=220truemm
%
%\textwidth=158truemm
\textwidth=160truemm

\hoffset=0truemm
\voffset=0truemm

\topmargin=0truemm
% \topmargin=-25truemm
%\topmargin=-10truemm
%\headheight=12pt
%\headsep=15pt
\oddsidemargin=0truemm
\evensidemargin=0truemm

\newtheorem{lemma}{\bf Lemma}[section]
\newtheorem{theorem}{\bf Theorem}[section]
\newtheorem{proposition}{\bf Proposition}[section]

\newtheorem{remark}{\bf Remark}[section]

\numberwithin{equation}{section}

\pagestyle{myheadings} \markboth{\it X. Li \& J. Li } {\it Transition layers to chemotaxis-consumption models}

\begin{document}
	
	\title{{\Large Transition layers to chemotaxis-consumption models with volume-filling effect}
		\footnotetext{\small
			*Corresponding author.}
		\footnotetext{\small E-mail addresses: lixw928@nenu.edu.cn (XL), lijy645@nenu.edu.cn (JL).} }
	
	\author{{Xiaowen Li and Jingyu Li$^\ast$}\\[2mm]
		\small\it School of Mathematics and Statistics, Northeast Normal University,\\
		\small\it   Changchun 130024, P.R.China}

	\date{}
	
	\maketitle
	
	\begin{quote}
		\small \textbf{Abstract}: We are interested in the dynamical behaviors of solutions to a parabolic-parabolic chemotaxis-consumption model with a volume-filling effect on a bounded interval, where the physical no-flux boundary condition for the bacteria and mixed Dirichlet-Neumann boundary condition for the oxygen are prescribed. By taking a continuity argument, we first show that the model admits a unique nonconstant steady state. Then we use Helly's compactness theorem to show that the asymptotic profile of steady state is a transition layer as the chemotactic coefficient goes to infinity. Finally, based on the energy method along with a cancellation structure of the model, we show that the steady state is nonlinearly stable under appropriate perturbations. Moreover, we do not need any assumption on the parameters in showing the stability of steady state.
		
		\indent \textbf{Keywords}: Chemotaxis; volume-filling; transition layer; stability; physical boundary
conditions.
		
		\indent \textbf{AMS (2020) Subject Classification}: 35Q92, 35B35, 92C17

%35Q92  	PDEs in connection with biology, chemistry and other natural sciences

%92C17  	Cell movement (chemotaxis, etc.)
			
%35B35  	Stability in context of PDEs	

	\end{quote}

	%%%%%%  1. Introduction  %%%%%%
	
\section{Introduction}

In order to prevent the cell overcrowding that is undesirable from a biological standpoint, Hillen and Painter \cite{Hillen-Painter01,Hillen-Painter02} proposed the following chemotaxis model with a volume-filling effect:
\begin{equation}\label{1.1}
	\begin{cases}u_t=\nabla\cdot\left[\left(q(u)-q^{\prime}(u) u\right) \nabla u-\chi q(u) u \nabla v\right]+f(u,v),
		\\ v_t=\Delta v+g(u,v),\end{cases}
\end{equation}
where $u$ is the density of cell population, and $v$ is the concentration of chemical signal. The positive parameter $\chi$ represents the chemotactic coefficient that measures the strength of chemoattractants. $q(u)$ denotes the probability of cells finding space at their neighbouring locations, which generally satisfies the following condition: there is a maximal cell number $K$, called crowing capacity, such that
\begin{equation*}
	q(u)\equiv0,\	\forall u\geq K\text{ and }
	0< q(u)\leq1, \ q'(u)\leq0, \ \forall \ 0\leq u< K.
\end{equation*}
$f(u,v)$ and $g(u,v)$ represent the cell and chemical kinetics, respectively. Typical candidates of $g(u,v)$ include $g(u,v)=u-v$ and $g(u,v)=-uv$. In the former case the signal is produced by the cells, and the system is a chemotaxis-production model. In the latter case the signal is consumed by the cells, and the system is a chemotaxis-consumption model. In contrast to the classical chemotaxis model introduced by Keller and Segel \cite{KS}, the system \eqref{1.1} does not treat cells as point masses but takes into account the finite size of cells.

There are fruitful analytical works for the system \eqref{1.1} when the chemical signal is produced by the cells, i.e.  $g(u,v)=u-v$. If there is no growth term, i.e. $f(u,v)=0$, the system is relatively easy to handle and there are many profound results. In the case $q(u)=1-u$, Hillen and Painter \cite{Hillen-Painter01} first proved the global existence of classical solutions on a compact Riemannian manifold without boundary. Wrzosek \cite{Wrzosek04} generalized the works of  \cite{Hillen-Painter01} to weak solutions on a bounded domain with general boundary conditions, and showed that, there is a global attractor by using the dynamical system approach. Wrzosek \cite{Wrzosek06} further studied the large time behaviors of solutions by constructing Lyapunov functional. Subsequently, Jiang and Zhang \cite{Jiang-Zhang} showed that every solution of the system converges to an equilibrium as the time goes to infinity by using a non-smooth version of the {\L}ojasiewicz-Simon inequality. Wang \emph{et al.} \cite{Wang-Winkler-Wrzosek11,Wang-Winkler-Wrzosek12} further considered the general volume-filling effect, and discovered a critical parameter condition to separate the global existence and formation of singularity, where the singularity means the solution attains the value $1$ in either finite or infinite time. The volume-filling mechanism excludes the blow-up behaviors of cell density but usually generates complex biological patterns. Potapov and Hillen \cite{Potapov-Hillen05} performed the local bifurcation analysis to investigate the pattern formation in 1D. Wang and Xu \cite{Wang-Xu13} generalized the work of  \cite{Potapov-Hillen05} through a global bifurcation analysis. In fact, the global bifurcation theory was initially used in \cite{Wang2000} to prove the existence of spiky solutions of the chemotaxis models. Moreover, \cite{Wang-Xu13} also showed that, in contrast to the classical Keller-Segel model where the one is a spike, the asymptotic profile of steady state of  the system \eqref{1.1} with $g(u,v)=u-v$, $q(u)=1-u$ and $f(u,v)=0$ is indeed a transition layer in the sense that the limit of cell density is a step function as $\chi\rightarrow\infty$. If the logistic growth of cells is taken into account, i.e. $f(u,v)=u(1-u)$, the system is very difficult to analyze and there are only a few results achieved. Ma \emph{et al.} \cite{Ma-Ou-Wang12} employed the index theory together with the maximum principle to show that under the effect of growth the system has nonconstant steady states as $\chi$ grows. Ma \emph{et al.} \cite{Ma-Gao-Tong-Han,Ma-Wang15,Ma-Wang17} further generalized the works of  \cite{Ma-Ou-Wang12} to models with general volume-filling effect. Furthermore, \cite{Ma-Wang17,Hillen-Painter02,Wang-Hillen07} numerically predicted that under the effect of growth, the system generates spatio-temporal patterns and even chaotic patterns. Besides the spatial patterns, Ou and Yuan \cite{Ou-Yuan09} proved the existence of traveling front if the chemotactic coefficient is small. Although the system admits various patterns, it is usually challenging  to prove their stability. The only attempt was carried out by Lai \emph{et al.} \cite{Lai16}, where it was shown that the transition layer obtained by Wang and Xu \cite{Wang-Xu13} is nonlinearly stable under appropriate perturbations if $\chi\gg1$. For more works on the chemotaxis-production models with volume-filling effects, we refer the interested readers to \cite{Hillen-Painter09,Wrzosek10}.

In many experiments, biological patterns are formed by bacteria stimulated by the nutrient rather than the chemical signal released by themselves. For instance, E. coli in a capillary tube move as a traveling band by consuming oxygen \cite{Adler}, aerobic bacteria accumulate around the surface of a water drop to form a plume pattern by consuming oxygen \cite{Tuval}. Compared with the chemotaxis-production model, the volume-filling effect on the chemotaxis-consumption model is much less understood. The purpose of this paper is to investigate the dynamics of the chemotaxis-consumption model with volume-filling effect. The mathematical model is the following
\begin{equation}\label{original problem}
	\begin{cases}u_t=\left[\left(q(u)-q^{\prime}(u) u\right) u_x-\chi q(u) u v_x\right]_x, & x \in(0,L), \\ v_t=v_{x x}-u v, & x \in(0,L),\\ \left( u,v\right)\left(x_0,0\right)=\left(u_0,v_0\right)(x),& x \in(0,L),\end{cases}
\end{equation}
where $u$ is the density of bacteria, and $v$ is the concentration of oxygen. The boundary conditions of \eqref{original problem} are given by
\begin{equation}\label{original boundary condition}
	\begin{cases}
		\left(q(u)-q^{\prime}(u) u\right) u_x-\chi q(u) u v_x=0, \ \ & x=0,L,\\
		v_x(0,t)=0,~v(L,t)=b,&
\end{cases}\end{equation}
where $b>0$ is a constant. In other words, we prescribe the no-flux boundary condition for the bacteria and mixed nonhomogeneous Dirichlet-Neumann boundary condition for the oxygen. Lauffenburger \emph{et al.}~\cite{LAK} imposed the same mixed conditions to demonstrate the effects of biophysical transport processes along with biochemical reaction processes upon steady state bacterial population growth. This type of boundary conditions were also adopted by Tuval \emph{et al.} \cite{Tuval} to simulate the accumulation layers formed by aerobic bacteria in their experiment.

In this paper, we first show that the system \eqref{original problem}-\eqref{original boundary condition} has a unique nonconstant steady state $(U,V)$, where $U$ approaches a transition layer as the chemotactic coefficient $\chi\rightarrow\infty$. Then we show that the steady state $(U,V)$ is nonlinearly stable if the initial value $(u_0,v_0)$ is an appropriate small perturbation of $(U,V)$.

To show the existence of steady state, we use a parametric method along with the continuity argument inspired by the work of \cite{Lee}. By observing a monotonicity structure admitted by the volume-filling effect, we obtain the uniqueness of steady state based on the consumption mechanism of the model. It is a little bit tricky to derive the asymptotic profile of $(U,V)$ as $\chi\rightarrow\infty$. We first observe that both $U$ and $V$ are monotone, and $\int_0^LU(x)dx=m$ with $m$ being a constant independent of $\chi$. As in \cite{Wang2000, Wang-Xu13}, where the Helly's compactness theorem is applied, $U$ has a strong limit $U_\infty$ in $L^1(0,L)$. On the other hand, using the consumption mechanism of $V$, it is easy to establish the uniform estimates of $V$ in $W^{2,\infty}\left( 0,L\right)$. Hence, $V$ has a limit $V_\infty$ in $W^{1,\infty}\left( 0,L\right)$. After passing to the limit, we obtain the coupled equations of $(U_\infty,V_\infty)$. By frequently using the monotonicity of $(U_\infty,V_\infty)$, and the fact that $\int_0^LU_\infty(x)dx=m$, we then characterize that $U_\infty$ is a step function. Since $V_\infty$ is $W^{2,\infty}$ smooth, one can directly obtain its formula using its equation and the formula of $U_\infty$. To show the stability of steady state (with fixed $\chi$), we first notice that the bacterial mass is conserved, which stimulates us to take the anti-derivative method to reformulate the problem into new unknowns. Then thanks to a cancellation structure of the perturbation equations, by exploiting the dissipative mechanism of the model, along with the technique of \emph{a priori} assumption, we obtain the stability of steady state without any assumption on the parameters.

The remainder of this paper is organized as follows. In Section \ref{main results}, we state our main results on
the existence, uniqueness and asymptotic profile of steady states (Theorem \ref{existence theorem}) and nonlinear stability of steady states (Theorem \ref{stability theorem}). In Section \ref{steady states}, we study the stationary problem and prove Theorem \ref{existence theorem}. Section \ref{nonlear stability} is devoted to the proof of Theorem \ref{stability theorem}. In Section \ref{numerical}, we carry out some numerical simulations for the system  to illustrate and confirm our nolinear stability results of steady states.

\section{Statement of main results}\label{main results}

In this section, we shall initially present the results concerning the existence, uniqueness and asymptotic profile of steady states and then state our main results on the nonlinear stability of steady states. Throughout the paper, we denote by $H^k$ the standard Sobolev space $H^k(0,L)$. The integrals $\int_0^Lf(x)\mathrm{d}x$ and $\int_{0}^{t}\mathrm{e}^{\alpha \tau}\int_0^Lf(x,\tau)\mathrm{d}x\mathrm{d}\tau$ will be abbreviated as $\int f(x)$ and $\int_{0}^{t}\mathrm{e}^{\alpha \tau}\int f(x,\tau)$, respectively. For any function $g(x)$, we denote $g^\prime(x)=\frac{\mathrm{d}g}{\mathrm{d}x}$.

In view of the no-flux boundary condition of $u$, one can easily see that the mass of bacteria is conserved, that is
\begin{equation}\label{m}
	\int_0^Lu(x,t)dx=\int_0^Lu_0(x)dx\triangleq m, \ \forall t>0.
\end{equation}
Here we denote by $m$ the bacterial mass. Since it is expected that $u(x,t)\rightarrow U(x)$ as $t\rightarrow\infty$, $U$ should also satisfy $\int_0^LU(x)dx=m$. Therefore, the steady state of  the system \eqref{original problem}-\eqref{original boundary condition} satisfies
\begin{equation}\label{steady state problem}
	\begin{cases}{\left[\left(q(U)-q^{\prime}(U) U\right) U^\prime-\chi q(U) U V^\prime\right]^\prime=0,} & x \in(0,L), \\ V^{\prime\prime}=U V, & x \in(0,L), \\
		\int_0^L U(x) d x=m, \end{cases}
\end{equation}
with boundary conditions	
\begin{equation}\label{steady state boundary conditon}
	\left\{\begin{array}{l}
		\left(q(U)-q^{\prime}(U) U\right) U^\prime-\chi q(U) U V^\prime=0 \ \ \text{ at } x=0,L, \\
		V^\prime(0)=0,~ V(L)=b.
	\end{array}\right.
\end{equation}	
We also need the following assumptions on $q$:
\begin{itemize}
	\item[\textbf{(A)}] $q(u)$ is smooth, and there exists a constant $K>0$ such that $q(u)\equiv0$ for $u\geq K$ and $q(u)>0$, $q'(u)<0$, $q(u)$ is smooth on $(0,K)$.
\end{itemize}
A prototypical choice of $q(u)$ is $q(u)=\left[\left(1-\frac{u}{K}\right)^+ \right]^\gamma$ for $\gamma>0$, where $f^+=\max\{f,0\}$.
\begin{remark}
	Assumption \textbf{(A)} is proposed based on specific biological considerations. Recalling that $q(u)$ represents the probability of cells finding space in neighboring locations, when the cell capacity is not yet saturated ($0<u<K$), cells infiltrate, leading to $q(u)>0$. As the cell population grows, the probability of movement decreases, resulting in $q'(u)<0$ for all $0<u<K$. At full saturation ($u\geq K$), no further cells could fill in, so $q(u)=0$ for all $u\geq K$.
\end{remark}

Our first result on the existence, uniqueness and asymptotic profile of steady states is given below.
\begin{theorem}\label{existence theorem}  Assume \textbf{(A)} holds. Then for any $m\in\left(0,KL\right)$, the problem \eqref{steady state problem}-\eqref{steady state boundary conditon} admits a unique classical non-constant solution $(U,V)$ satisfying
	\begin{equation}\label{UV}
		0<U(x)<K, \quad 0<V(x) \leq b \quad \forall x \in [0,L].
	\end{equation}
	Furthermore, as $\chi \rightarrow \infty$,
	\begin{equation}
		U \rightarrow U_{\infty}=\begin{cases}
			0,~&x \in\left[0,L-\frac{m}{K}\right), \\
			K,~&x \in\left(L-\frac{m}{K}, L\right],
		\end{cases}
	\end{equation} pointwise; and
	\begin{equation}
		V \rightarrow V_{\infty}=\begin{cases}
			\frac{2b\mathrm{e}^{\frac{m}{\sqrt{K}}}}{1+\mathrm{e}^{\frac{2m}{\sqrt{K}}}},~&x \in\left[0,L-\frac{m}{K}\right], \\
			C_1 \mathrm{e}^{-\sqrt{K} x}+C_2 \mathrm{e}^{\sqrt{K} x},~&x \in\left[L-\frac{m}{K}, L\right],
		\end{cases}	
	\end{equation}uniformly  on $[0,L]$,
	where $C_1=b\mathrm{e}^{\sqrt{K}L}/(1+\mathrm{e}^{\frac{2m}{\sqrt{K}}})$ and $C_2=b\mathrm{e}^{-\sqrt{K}L+\frac{2m}{\sqrt{K}}}/(1+\mathrm{e}^{\frac{2m}{\sqrt{K}}})$.
\end{theorem}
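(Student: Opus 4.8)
The plan is to decouple the system \eqref{steady state problem} by exploiting the zero-flux structure. First I would integrate the first equation of \eqref{steady state problem} and invoke the no-flux condition in \eqref{steady state boundary conditon} to conclude that the flux vanishes identically, so that $\left(q(U)-q'(U)U\right)U' = \chi q(U)U V'$ on $(0,L)$. Dividing by $q(U)U$ and noting that $\frac{q(U)-q'(U)U}{q(U)U} = \left(\ln\frac{U}{q(U)}\right)'$, a single integration yields the constitutive relation
\[
\frac{U}{q(U)} = \lambda\, \mathrm{e}^{\chi V}, \qquad \lambda>0 .
\]
Under assumption \textbf{(A)} the map $h(u):=u/q(u)$ is a smooth strictly increasing bijection from $[0,K)$ onto $[0,\infty)$, so $U=h^{-1}(\lambda\mathrm{e}^{\chi V})$ is well defined and increasing in both $V$ and $\lambda$. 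Substituting into the second equation reduces \eqref{steady state problem}-\eqref{steady state boundary conditon} to the scalar boundary value problem $V''=h^{-1}(\lambda\mathrm{e}^{\chi V})\,V$ with $V'(0)=0$, $V(L)=b$, now carrying the single parameter $\lambda$, while the constraint $\int_0^L U\,dx=m$ is to be used to select $\lambda$.

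Next, for each fixed $\lambda$ I would solve this scalar problem. Since the right-hand side $R(V):=h^{-1}(\lambda\mathrm{e}^{\chi V})V$ is nonnegative and nondecreasing in $V$, the constants $0$ and $b$ serve as ordered sub- and supersolutions, so a monotone iteration produces a solution with $0\le V\le b$; the monotonicity of $R$ also gives uniqueness for fixed $\lambda$ via a maximum principle applied to the difference of two solutions. Because $V''=UV>0$, $V$ is strictly convex, and together with $V'(0)=0$ this forces $V$ (hence $U$) to be strictly increasing, so the solution is non-constant. To upgrade positivity I would compare $V$ with the solution $\overline V=b\cosh(\sqrt K x)/\cosh(\sqrt K L)$ of $\overline V''=K\overline V$: from $U\le K$ one gets $V''-KV\le0=\overline V''-K\overline V$, and a maximum principle for the difference gives $V\ge\overline V\ge b/\cosh(\sqrt K L)>0$, uniformly in $\chi$ and $\lambda$. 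This yields $0<U<K$ and $0<V\le b$ as in \eqref{UV}, and since $U$ stays in a compact subinterval of $(0,K)$ where $q$ is smooth, the solution is classical.

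To obtain existence and uniqueness I would run a continuity-plus-monotonicity argument in $\lambda$ for the mass $M(\lambda):=\int_0^L h^{-1}(\lambda\mathrm{e}^{\chi V_\lambda})\,dx$. Continuous dependence makes $M$ continuous, while $M(\lambda)\to0$ as $\lambda\to0$ and $M(\lambda)\to KL$ as $\lambda\to\infty$ (since $h^{-1}(\lambda\mathrm{e}^{\chi V})\to0$, resp.\ $\to K$, pointwise), so by the intermediate value theorem every $m\in(0,KL)$ is attained. For uniqueness I would prove that $M$ is strictly increasing, which is the delicate point. Differentiating the constitutive relation and the $V$-equation in $\lambda$, the sensitivity $\phi:=\partial_\lambda\ln h(U)=\tfrac1\lambda+\chi\,\partial_\lambda V$ is found to satisfy a linear problem of the form $\phi''-c(x)\phi=-U/\lambda\le0$ with $c(x)\ge0$, $\phi'(0)=0$ and $\phi(L)=1/\lambda>0$. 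Here the consumption structure $V''=UV$ with $U>0$ is exactly what makes the zeroth-order coefficient nonnegative and the source nonpositive, so the maximum principle gives $\phi>0$, hence $\partial_\lambda U>0$ pointwise and $M'(\lambda)=\int_0^L\partial_\lambda U\,dx>0$. Thus $\lambda$, and therefore the steady state, is unique.

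Finally, for the profile as $\chi\to\infty$ I would pass to the limit using the uniform estimates. Since $|V''|=|UV|\le Kb$, the family $V$ is bounded in $W^{2,\infty}(0,L)$, so along a subsequence $V\to V_\infty$ in $C^1$, with $V_\infty$ nondecreasing and $V_\infty\ge b/\cosh(\sqrt K L)>0$; as $U$ is monotone and uniformly bounded, Helly's theorem gives $U\to U_\infty$ in $L^1$ (and a.e.), with $U_\infty$ nondecreasing and $\int_0^L U_\infty=m$. Writing $\mu:=\chi^{-1}\ln\lambda$, the relation becomes $\chi^{-1}\ln h(U)=V+\mu$; the constraint $m\in(0,KL)$ forces $\mu$ to stay bounded, so $\mu\to\mu_\infty$ along a further subsequence. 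Then at points where $V_\infty+\mu_\infty>0$ the left side must blow up, forcing $U\to K$, while $V_\infty+\mu_\infty<0$ forces $U\to0$; on any interval where $V_\infty+\mu_\infty=0$ one has $V_\infty$ constant, so $V_\infty''=0=U_\infty V_\infty$, and since $V_\infty>0$ this gives $U_\infty=0$. Hence $U_\infty\in\{0,K\}$ a.e., and being nondecreasing it equals $K\,\mathbf{1}_{(x^\ast,L]}$; the mass constraint pins $x^\ast=L-m/K$. Finally $V_\infty$ solves $V_\infty''=U_\infty V_\infty$ with $V_\infty'(0)=0$, $V_\infty(L)=b$ and $C^1$-matching at $x^\ast$, so it is constant on $[0,x^\ast]$ and a combination of $\mathrm{e}^{\pm\sqrt K x}$ on $[x^\ast,L]$; solving for the constants recovers the stated $V_\infty$. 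I expect the main obstacle to be precisely this last characterization — ruling out an intermediate-density plateau so that the limit is a sharp transition layer — for which the uniform positive lower bound on $V$ and the exponential relation are indispensable; uniqueness of the limit then promotes subsequential to full convergence.
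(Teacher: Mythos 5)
Your proposal is correct in outline and reaches all three conclusions, but it departs from the paper's argument in two substantive places, both worth noting. For uniqueness, the paper fixes two putative solutions with $\lambda_1<\lambda_2$, shows $V_{\lambda_1}>V_{\lambda_2}$ by comparison, and then runs a case analysis on the sign changes of $W_1-W_2$ with $W=U/q(U)$, using $W'=\chi WV'$ and $V'(x)=\int_0^x UV$ to reach a contradiction in each case. You instead prove strict monotonicity of the mass map $M(\lambda)$ by differentiating in $\lambda$: the sensitivity $\phi=\tfrac1\lambda+\chi\partial_\lambda V$ satisfies $\phi''-c(x)\phi=-U/\lambda$ with $c\ge0$, $\phi'(0)=0$, $\phi(L)=1/\lambda$, so $\phi>0$ and $M'>0$. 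This is a cleaner mechanism and isolates the same consumption structure, but it presupposes differentiability of $\lambda\mapsto V_\lambda$, which the paper deliberately avoids (it only proves Lipschitz continuity); you would need to justify this via the implicit function theorem (invertibility of $-\partial_{xx}+c$ under the mixed boundary conditions), or else recast your maximum-principle computation at the level of finite differences. For the $\chi\to\infty$ profile, the paper proves $\frac1\chi\int(q(U)-q'(U)U)^2(U')^2\le C$ by an energy identity and passes to the distributional limit to get $q(U_\infty)U_\infty V_\infty'=0$, then argues by contradiction with the mass constraint; you instead read off the dichotomy pointwise from $\chi^{-1}\ln h(U)=V+\mu$ with $\mu=\chi^{-1}\ln\lambda$ bounded. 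Your route is more elementary and avoids the energy estimate entirely, but it hinges on the observation that the exceptional set $\{V_\infty+\mu_\infty=0\}$ is, by monotonicity of $V_\infty$, a single closed interval on whose interior $V_\infty''=0=U_\infty V_\infty$ forces $U_\infty=0$ — you state this correctly, and your uniform lower bound $V\ge b/\cosh(\sqrt K L)$ (obtained by comparison with $b\cosh(\sqrt Kx)/\cosh(\sqrt KL)$, a bound the paper does not derive) is exactly what makes that step close. The existence part is essentially identical to the paper's.
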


Over the past few decades, a broad mathematical literature was devoted to the classical chemotaxis-consumption model in which $q\equiv1$ of \eqref{original problem}. Significant progress has been made in understanding the global existence and long-time behavior of solutions (cf. \cite{JinHY,Lankeit-Long-time,one-dimension-S,Lankeit-2017,Li-Li,TaoY2011,TaoY2012}). However, much less is known about the chemotaxis-consumption model with volume-filling effect. Our second result is the establishment of the global existence of solutions to \eqref{original problem} expressed through a nonlinear squeezing probability and the proof that, as time goes to infty, these solutions stabilize towards steady states obtained in Theorem \ref{existence theorem}.

\begin{theorem}\label{stability theorem} Let $\chi>0$ be fixed. Assume \textbf{(A)} holds. Suppose that $(u_0,v_0)\in H^1$ with $0\leq u_0(x)<K$, $v_0(x)\geq0$ for all $x\in[0,L]$, and that $v_0(L)=b$. Let $\left(U,V\right)$ be the steady state obtained in Theorem \ref{existence theorem} with $m=\int_0^L u_0 d x$. Define
	\[\phi_0(x)=\int_0^x\left(u_0(y)-U(y)\right) \mathrm{d} y.\]
	Then there exists a constant $\epsilon$ such that if $\left\|\phi_0\right\|_{H^2}^2+\left\|v_0-V\right\|_{H^1}^2\leq \epsilon$,	
	then the initial boundary value problem \eqref{original problem}-\eqref{original boundary condition} admits a unique global solution $(u,v)$ satisfying
	$$
	u -U\in C\left([0, \infty) ; H^1\right) \cap L^2\left(0, \infty ; H^2\right), \quad v-V \in C\left([0, \infty) ; H^1\right) \cap L^2\left(0, \infty ; H^2\right).
	$$
	Furthermore, the solution has the following asymptotic decay
	\begin{equation}\label{convergence}
		\|(u-U, v-V)(\cdot, t)\|_{H^1} \leq C\left\|(u_0-U,v_0-V)\right\|_{H^1}\mathrm{e}^{-\alpha_0 t} \text { for any } t \geq 0,	
	\end{equation}
	where $\alpha_0$ and $C$ are positive constants independent of $t$.
\end{theorem}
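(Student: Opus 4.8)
The plan is to reduce the problem to a perturbation system by an anti-derivative substitution, and then close a time-weighted energy estimate by a continuity argument.

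First I would introduce the perturbations. Conservation of bacterial mass gives $\int_0^L(u-U)\,\mathrm{d}x=0$ for every $t$, so
\[
\phi(x,t)=\int_0^x\bigl(u(y,t)-U(y)\bigr)\,\mathrm{d}y
\]
is well defined with $\phi(0,t)=\phi(L,t)=0$ and $\phi_x=u-U$; setting $\psi=v-V$, the boundary data force $\psi_x(0,t)=0$ and $\psi(L,t)=0$. The decisive structural fact is that the stationary flux vanishes identically: the first line of \eqref{steady state problem} says $(q(U)-q'(U)U)U'-\chi q(U)UV'$ is constant, and \eqref{steady state boundary conditon} forces it to be zero at the endpoints, hence zero throughout $[0,L]$. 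Integrating the first equation of \eqref{original problem} from $0$ to $x$ and using the no-flux condition then turns the $u$-equation into a closed equation for $\phi$ with no undifferentiated forcing,
\[
\phi_t=A(u)\phi_{xx}-\chi B(u)\psi_x+\bigl[A(u)-A(U)\bigr]U'-\chi\bigl[B(u)-B(U)\bigr]V',
\]
where $A(u)=q(u)-q'(u)u$, $B(u)=q(u)u$ and $u=U+\phi_x$, while the oxygen equation becomes
\[
\psi_t=\psi_{xx}-U\psi-V\phi_x-\phi_x\psi .
\]
These identities are exact, and every term carrying a factor $\phi_x$ or $\psi$ is a genuine higher-order perturbation once the solution is small.

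After recording local existence by standard parabolic theory --- note that for fixed $\chi$ the bound $0<U<K$ keeps $A(U)$ bounded below by a positive constant, so the $\phi$-equation is uniformly parabolic --- I would run the energy method under the a priori assumption $\sup_{0\le t\le T}\bigl(\|\phi\|_{H^2}^2+\|\psi\|_{H^1}^2\bigr)\le\delta$ for small $\delta$, which also guarantees $0\le u<K$ so that all coefficients remain smooth and bounded. Testing the $\phi$-equation against $\phi$ and the $\psi$-equation against $\psi$, integrating by parts and using the boundary conditions, produces the dissipation $\int A(u)\phi_x^2$, $\int\psi_x^2$ and $\int U\psi^2$. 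Because $\phi$ vanishes at both ends and $\psi$ vanishes at $x=L$, the Poincar\'e inequality bounds $\int\phi^2$ and $\int\psi^2$ by these dissipative terms, so that the full energy $E=\|\phi\|_{H^2}^2+\|\psi\|_{H^1}^2$ is controlled by the dissipation $D=\|\phi_x\|_{H^2}^2+\|\psi_x\|_{H^1}^2$; this coercivity is exactly what upgrades the decay from algebraic to exponential. I would then differentiate the system and repeat, controlling $\phi_x,\phi_{xx}$ and $\psi_x$ (equivalently $\|u-U\|_{H^1}^2$ with its $H^2$ dissipation). Carrying the weight $\mathrm{e}^{\alpha\tau}$ throughout and choosing $\alpha$ small, the target is a differential inequality of the form $\frac{\mathrm{d}}{\mathrm{d}t}\bigl(\mathrm{e}^{\alpha t}E\bigr)+\mathrm{e}^{\alpha t}D\le \mathrm{e}^{\alpha t}\,P(\delta)\,D$ with $P(\delta)\to0$, in which the nonlinear remainder is absorbed by the dissipation.

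The main obstacle is the coupling between the two equations. The cross terms $-\chi\int B(u)\phi\psi_x$ (from the chemotactic drift) and $-\int V\psi\phi_x$ (from consumption), together with their differentiated analogues, are of the same order as the dissipation, so a term-by-term Cauchy--Schwarz bound does not close --- and this is precisely where the cancellation advertised in the introduction must be used. Exploiting the stationary identity $A(U)U'=\chi B(U)V'$ to re-express $\chi V'$, and integrating the coupling terms by parts, the two worst contributions should combine into quantities dominated by $\int A(u)\phi_x^2+\int\psi_x^2$ after a Cauchy--Schwarz step with a small parameter; pinning down this cancellation and verifying that it persists at the first- and second-derivative levels is the heart of the proof, and it is what lets the conclusion hold for every fixed $\chi$ with no restriction on the parameters. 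Once the a priori assumption is recovered for $\epsilon$ sufficiently small, the continuity argument gives global existence by continuation and the bound $\mathrm{e}^{\alpha t}E(t)\le CE(0)$; since $\|\phi\|_{H^2}\simeq\|u-U\|_{H^1}$ and $\|\psi\|_{H^1}=\|v-V\|_{H^1}$, this yields \eqref{convergence} with $\alpha_0=\alpha/2$.
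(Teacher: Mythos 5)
Your overall architecture --- the anti-derivative reformulation with $\phi(x,t)=\int_0^x(u-U)\,\mathrm{d}y$, $\psi=v-V$, local existence plus time-weighted \emph{a priori} estimates closed by a continuity argument --- is exactly the paper's. But there is a genuine gap at the step you yourself flag as ``the heart of the proof.'' You correctly observe that the linear cross terms $-\chi\int S(U)\psi_x\phi$ and $-\int V\phi_x\psi$ are of the same order as the dissipation and cannot be absorbed term by term; however, your proposed remedy (re-expressing $\chi V'$ via the stationary identity, integrating by parts, and then doing ``a Cauchy--Schwarz step with a small parameter'') does not work. After integration by parts the combined cross term is $\int(\chi S(U)-V)\phi_x\psi$ plus lower-order pieces; its coefficient is order one (indeed large in $\chi$), so absorbing it into $\int\phi_x^2+\int\psi_x^2$ via Cauchy--Schwarz would require a smallness of parameters that the theorem explicitly does not assume. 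No amount of small-parameter juggling closes this estimate.

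The missing idea is the choice of \emph{weighted} test functions: the paper multiplies the $\phi$-equation by $\phi/S(U)$ and the $\psi$-equation by $\chi\psi/V$. With these weights the linear couplings become $-\chi\int\psi_x\phi-\chi\int\phi_x\psi=-\chi\int(\phi\psi)_x=0$ by the boundary conditions $\phi(0)=\phi(L)=0$ --- an exact cancellation, not an absorption. Two further structural facts are then needed and are absent from your sketch: (i) the weight $\chi/V$ produces the term $\chi\int\bigl[(\tfrac{V'}{2V^2})_x+\tfrac{U}{V}\bigr]\psi^2$, whose nonnegativity rests on the pointwise steady-state inequality $(V')^2\le UV^2$ (a separate lemma about $(U,V)$); and (ii) the stationary identity $D(U)U'=\chi S(U)V'$ is used not on the cross terms but to show that the coefficient of $\phi\phi_x$ in the weighted estimate vanishes identically. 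Your higher-order step also differs (you differentiate in $x$, the paper tests against $\phi_t/D(U)$, $\psi_t$ and then differentiates in $t$), but that is a matter of route; without the weighted $L^2$ cancellation the base estimate does not close for arbitrary fixed $\chi$, so the proof as proposed is incomplete.
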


	\begin{remark} We need $0\leq u_0(x)<K$ for all $x\in[0,L]$  to simplify the dynamics of the system  \eqref{original problem}-\eqref{original boundary condition}. Otherwise, if $u_0(x)=K$ for some $x\in [0,L]$, the system  \eqref{original problem}-\eqref{original boundary condition} might be degenerate since either the diffusion coefficient of $u$ or the chemotactic coefficient becomes zero at the point where $u(x,t)=K$. Furthermore, if $u_0(x)>K$ for some $x\in [0,L]$, one may encounter a free boundary problem.
	Both the degenerate problem and the free boundary problem are quite challenging, and we leave the problems of dynamics of the system  \eqref{original problem}-\eqref{original boundary condition} with general initial conditions for future study.
	
	It is an interesting question to investigate the effect of $\chi$ on the large time behavior of solutions. However, in the current argument, the background solutions $(U,V)$  depend on $\chi$, and particularly, the limit of bacterial density as $\chi$ goes to infinity is a step function. This makes it challenging to derive an explicit formula for the effect of large $\chi$ on the dynamical stability. Our current result is a first attempt to explore the formation of bacterial aggregation for the chemotaxis-consumption model under volume-filling effect. We believe that, as in the chemotaxis-production model, large $\chi$ also affects the dynamics of the chemotaxis-consumption model. We leave this interesting problem for future study.

\end{remark}

\section{Steady states}\label{steady states}

In this section, we shall study the stationary problem \eqref{steady state problem}-\eqref{steady state boundary conditon} and prove Theorem \ref{existence theorem}.
\begin{proof}[Proof of Theorem \ref{existence theorem}--existence] We shall follow the framework of \cite{Lee} to show the existence of $(U,V)$.
	Owing to the no-flux boundary condition of $U$, it is easy to see that the steady state $(U,V)$ satisfies
	\begin{equation}\label{transformed steady state problem}
		\begin{cases}\left(q(U)-q^{\prime}(U) U\right) U^{\prime}-\chi q(U) U V^{\prime}=0,  \\ V^{\prime\prime}=U V,\\V^{\prime}(0)=0,~ V(L)=b,\\
			\int_0^L U(x) d x=m.\end{cases}
	\end{equation}	We attempt to find the solution $(U,V)$ satisfying $V(x)>0$ and $0<U(x)<K$ ensuring that $q(U(x))>0$ for all $x\in[0,L]$.
	The first equation of \eqref{transformed steady state problem} gives
	\begin{equation}\label{eq25}
		\left( \ln \frac{U}{q(U)}\right)^{\prime}(x)=\chi V^{\prime}(x),
	\end{equation}
	which further leads to
	\begin{equation}\label{3.3}
		\frac{U}{q(U)}=\lambda \mathrm{e}^{\chi V} \text{ for some constant }\lambda>0.
	\end{equation}
	We next divide our argument into three steps.

	\textit{Step 1.} Set $G(U)\triangleq\frac{U}{q(U)}$, then
	\begin{equation}\nonumber
		G'(U)=\frac{q(U)-q'(U)U}{q^2(U)}>0 \text{ for } U\in(0,K),	
	\end{equation}
	due to $q(U)>0$ and $q'(U)<0$ for $U\in(0,K)$. Thus $U=G^{-1}\left(\lambda \mathrm{e}^{\chi V} \right)\in(0,K)$. Substituting this formula into the second equation of \eqref{transformed steady state problem}, we have
	\begin{equation}\label{Vxx=UV}
		\begin{cases}
			V^{\prime\prime}=G^{-1}\left(\lambda \mathrm{e}^{\chi V}\right)V,\\
			V^{\prime}(0)=0,~V(L)=b,\\
			\int_{0}^{L}G^{-1}\left(\lambda \mathrm{e}^{\chi V} \right)\mathrm{d}x=m.
		\end{cases}
	\end{equation}	
	It suffices to construct a solution pair $(\lambda,V)$ of \eqref{Vxx=UV} with $\lambda>0$ and $0<V\leq b$. To achieve this, we first fix $\lambda>0$ and construct a solution $V(x;\lambda)$ of
	\begin{equation}\label{3.5}
		\begin{cases}
			V^{\prime\prime}=G^{-1}\left(\lambda \mathrm{e}^{\chi V}\right)V,\\
			V^\prime(0)=0,~V(L)=b.
		\end{cases}
	\end{equation}	
	It is apparently that the constant $b$ serves as an upper solution of \eqref{3.5}, while $0$ is a lower solution. By employing the standard monotone iteration scheme and noting that the function $f(s)=G^{-1}\left(\lambda \mathrm{e}^{\chi s}\right)s$ is increasing for $s>0$, we obtain that \eqref{3.5} admits a unique classical solution $V(x;\lambda)$ satisfying
	\begin{equation*}
		0<V(x;\lambda)\leq b \quad \text{on }[0,L].
	\end{equation*}

	\textit{Step 2.} We next show the continuity of $V(x;\lambda)$ with respect to $\lambda$ in some topology. Let $\lambda_1>\lambda_2>0$ and $V_{\lambda_i}$ be the solutions of \eqref{3.5} with $\lambda=\lambda_i$, $i=1,2$, respectively. One may check that $V_{\lambda_1}-V_{\lambda_2}$ satisfies
	\begin{equation}\label{V1-V2}
		\begin{cases}
			-\left(V_{\lambda_{1}}-V_{\lambda_{2}} \right)^{\prime\prime}+G^{-1}\left(\lambda_2 \mathrm{e}^{\chi V_{\lambda_2}}\right)\left(V_{\lambda_{1}}-V_{\lambda_{2}}\right)=-F\left(V_{\lambda_{1}}, V_{\lambda_{2}}\right)V_{\lambda_{1}},\\
			\left(V_{\lambda_{1}}-V_{\lambda_{2}} \right)^\prime(0)=0,~\left(V_{\lambda_{1}}-V_{\lambda_{2}} \right)(L)=0,
		\end{cases}
	\end{equation}	
	where
	$F\left(V_{\lambda_{1}}, V_{\lambda_{2}}\right)\triangleq G^{-1}\left(\lambda_1 \mathrm{e}^{\chi V_{\lambda_1}}\right)-G^{-1}\left(\lambda_2 \mathrm{e}^{\chi V_{\lambda_2}}\right)$.	
	A direct calculation gives
	\begin{equation}\label{F}
		\begin{aligned}
			&\quad F\left(V_{\lambda_{1}}, V_{\lambda_{2}}\right)(x)\\&=\int_{0}^{1}\frac{\mathrm{d}G^{-1}}{\mathrm{d}s}\left(s\lambda_{1}\mathrm{e}^{\chi V_{\lambda_1}}+\left(1-s\right)\lambda_{2}\mathrm{e}^{\chi V_{\lambda_2}}\right) \mathrm{d}s	\\&= \int_{0}^{1}\left(G^{-1} \right)'\left(s\lambda_{1}\mathrm{e}^{\chi V_{\lambda_1}}+\left(1-s\right)\lambda_{2}\mathrm{e}^{\chi V_{\lambda_2}}\right) \mathrm{d}s	\cdot\left(\lambda_{1}\mathrm{e}^{\chi V_{\lambda_1}}-\lambda_{2}\mathrm{e}^{\chi V_{\lambda_2}}\right)\\&=\int_{0}^{1}\left(G^{-1} \right)'\left(s\lambda_{1}\mathrm{e}^{\chi V_{\lambda_1}}+\left(1-s\right)\lambda_{2}\mathrm{e}^{\chi V_{\lambda_2}}\right) \mathrm{d}s	\cdot\lambda_{1}\left(\mathrm{e}^{\chi V_{\lambda_1}}-\mathrm{e}^{\chi V_{\lambda_2}}\right)\\&\quad+\int_{0}^{1}\left(G^{-1} \right)'\left(s\lambda_{1}\mathrm{e}^{\chi V_{\lambda_1}}+\left(1-s\right)\lambda_{2}\mathrm{e}^{\chi V_{\lambda_2}}\right) \mathrm{d}s	\cdot\mathrm{e}^{\chi V_{\lambda_2}}\left(\lambda_{1}-\lambda_{2}\right)\\&=\xi\left(V_{\lambda_{1}}, V_{\lambda_{2}}\right)\cdot\lambda_{1}\cdot \eta\left(V_{\lambda_{1}}, V_{\lambda_{2}}\right)	\cdot\chi\left(V_{\lambda_{1}}-V_{\lambda_{2}}\right)+\xi\left(V_{\lambda_{1}}, V_{\lambda_{2}}\right)	\cdot\mathrm{e}^{\chi V_{\lambda_2}}\left(\lambda_{1}-\lambda_{2}\right),
		\end{aligned}
	\end{equation}
	where
	$$\xi\left(V_{\lambda_{1}}, V_{\lambda_{2}}\right)(x)\triangleq\int_{0}^{1}\left(G^{-1} \right)'\left(s\lambda_{1}\mathrm{e}^{\chi V_{\lambda_1}}+\left(1-s\right)\lambda_{2}\mathrm{e}^{\chi V_{\lambda_2}}\right) \mathrm{d}s>0,$$
	and
	$$
	\eta\left(V_{\lambda_{1}}, V_{\lambda_{2}}\right)(x)\triangleq\int_{0}^{1}\mathrm{e}^{s\chi V_{\lambda_1}+(1-s)\chi V_{\lambda_2}}\mathrm{d}s>0.
	$$
	Multiplying \eqref{V1-V2} by $V_{\lambda_{1}}-V_{\lambda_{2}}$, integrating the equation over $(0,L)$ and using
	\eqref{F}, we have
	\begin{equation}\label{3.8}
		\begin{aligned}
			&\quad\int_0^L\left|\left(V_{\lambda_{1}}-V_{\lambda_{2}} \right)^\prime\right|^2+\int_0^L G^{-1}\left(\lambda_2 \mathrm{e}^{\chi V_{\lambda_2}}\right)\left(V_{\lambda_{1}}-V_{\lambda_{2}}\right)^2\\&\quad+\lambda_{1}\chi\int_0^L \xi\left(V_{\lambda_{1}}, V_{\lambda_{2}}\right) \eta\left(V_{\lambda_{1}}, V_{\lambda_{2}}\right)V_{\lambda_{1}}\left(V_{\lambda_{1}}-V_{\lambda_{2}}\right)^2\\&=\left(\lambda_{1}-\lambda_{2}\right)\int_0^L \xi\left(V_{\lambda_{1}}, V_{\lambda_{2}}\right)	V_{\lambda_{1}}\mathrm{e}^{\chi V_{\lambda_2}}\left(V_{\lambda_{1}}-V_{\lambda_{2}}\right).
		\end{aligned}
	\end{equation}
	Thanks to the Sobolev inequality $\|V_{\lambda_{1}}-V_{\lambda_{2}}\|_{L^\infty}\leq L^{\frac{1}{2}} \|\left(V_{\lambda_{1}}-V_{\lambda_{2}} \right)^\prime\|_{L^2}$, we get from \eqref{3.8} that
	\begin{equation*}
		\|V_{\lambda_{1}}-V_{\lambda_{2}}\|^2_{L^\infty}\leq \left(\lambda_{1}-\lambda_{2}\right)\cdot L\int_0^L \xi\left(V_{\lambda_{1}}, V_{\lambda_{2}}\right)	V_{\lambda_{1}}\mathrm{e}^{\chi V_{\lambda_2}}\left(V_{\lambda_{1}}-V_{\lambda_{2}}\right).
	\end{equation*} It then follows from H\"{o}lder's inequality that
	\begin{equation*}
		\begin{split}
			\|V_{\lambda_{1}}-V_{\lambda_{2}}\|_{L^\infty}&\leq |\lambda_{1}-\lambda_{2}|\cdot L\int_0^L \xi\left(V_{\lambda_{1}}, V_{\lambda_{2}}\right)	V_{\lambda_{1}}\mathrm{e}^{\chi V_{\lambda_2}}\\
			&\leq|\lambda_{1}-\lambda_{2}|\cdot Lb\mathrm{e}^{\chi b}\int_0^L \xi\left(V_{\lambda_{1}}, V_{\lambda_{2}}\right).
	\end{split}\end{equation*}
	Thus, $V(x;\lambda)$ is continuous with respect to $\lambda$.

	\textit{Step 3.} Noting $0<V(x;\lambda)\leq b$, it holds $\lambda<\lambda\mathrm{e}^{\chi V}\leq\lambda\mathrm{e}^{\chi b}$. Recalling that the function $G(s)$ is defined by $G(s)=\dfrac{s}{q(s)}$, we have
	\[\lim _{\lambda \rightarrow 0^{+}}G^{-1}\left( \lambda\mathrm{e}^{\chi V_\lambda}\right)=G^{-1}(0)=0\quad \text { and }  \quad\lim _{\lambda \rightarrow \infty}G^{-1}\left( \lambda\mathrm{e}^{\chi V_\lambda}\right) =K. \]Thus,
	\begin{equation}\nonumber
		\lim _{\lambda \rightarrow 0^{+}}  \int_{0}^{L}G^{-1}\left( \lambda\mathrm{e}^{\chi V}\right)  \mathrm{d} x=0 \quad \text { and } \quad \lim _{\lambda \rightarrow \infty} \int_{0}^{L}G^{-1}\left( \lambda\mathrm{e}^{\chi V}\right) \mathrm{d} x=KL>m.
	\end{equation}
	Then by continuity of $V(x;\lambda)$\ in $\lambda$ established in Step 2, we can find a constant $\lambda_m>0$  such that the corresponding solution $V(x;\lambda_m)$ of \eqref{3.5} satisfies $\int_0^LG^{-1}\left(\lambda_m \mathrm{e}^{\chi V(x;\lambda_m)} \right)\mathrm{d}x=m$. Consequently, $V(x;\lambda_m)$ is a solution of \eqref{Vxx=UV}. Define $U(x;\lambda_m)\triangleq G^{-1}\left(\lambda_m \mathrm{e}^{\chi V(x;\lambda_m)} \right)$. Then $(U(x;\lambda_m),V(x;\lambda_m))$ is a solution of \eqref{transformed steady state problem}. We thus finish the proof of the existence of steady states.
\end{proof}

\begin{proof}[Proof of Theorem \ref{existence theorem}--uniqueness]	
	Suppose that there are two solutions $(U_1,V_1)$ and $(U_2,V_2)$ of \eqref{transformed steady state problem} satisfying \eqref{UV}. Then by \eqref{3.3}, there are two positive real numbers $\lambda_1$ and $\lambda_2$ such that
	\begin{equation}\nonumber
		\frac{U_1}{q(U_1)}=\lambda_1\mathrm{e}^{\chi V_1},\quad	 \frac{U_2}{q(U_2)}=\lambda_2\mathrm{e}^{\chi V_2}.
	\end{equation}
	We assume without loss of generality that $0<\lambda_1\leq \lambda_2$. If $\lambda_1=\lambda_2$, then $V_1$ and $V_2$ are two solutions of \eqref{3.5} with the same $\lambda$. According to the uniqueness of solution to \eqref{3.5}, we have $V_1=V_2$ and hence $U_1=U_2$.
	
	If $0<\lambda_1<\lambda_2$, since $V_{1}$ satisfies $-\left(V_{1} \right)^{\prime\prime}+G^{-1}\left(\lambda_1 \mathrm{e}^{\chi V_{1}}\right)V_{_1}=0$, one immediately obtains from the monotonicity of $G^{-1}(s)$ for $s>0$ that $-\left(V_{1} \right)^{\prime\prime}+G^{-1}\left(\lambda_2 \mathrm{e}^{\chi V_{1}}\right)V_{1}>0$, which along with the standard comparison theorem for \eqref{3.5} yields
	\begin{equation}\label{V_1>V_2}
		V_1(x)>V_2(x),\quad	x\in (0,L).
	\end{equation}
	On the other hand, at $x=L$, we get
	\begin{equation}\label{eq24}
		\frac{U_1(L)}{q(U_1(L))}=\lambda_1\mathrm{e}^{\chi b}<\lambda_2\mathrm{e}^{\chi b}=\frac{U_2(L)}{q(U_2(L))},\end{equation}
	which in combination with $\frac{\mathrm{d}}{\mathrm{d}U}(\frac{U}{q(U)})>0$ gives $U_1(L)<U_2(L)$.
	
	For convenience we set $\frac{U(x)}{q(U(x))}\triangleq W(x)$ in the rest of the proof. Then, by virtue of \eqref{eq25} and \eqref{eq24}, we find $W$ satisfies
	\begin{equation}\label{W euq}
		W^\prime=\chi W V^\prime \ \text{on } (0,L),	
	\end{equation}
	and
	\begin{equation}\label{eq26}
		W_1(L)<W_2(L).
	\end{equation}	
	There are three cases regarding the profile of $W_1$ and $W_2$.
	
	\textit{Case 1.} If $W_1(x)\leq W_2(x)$ for all $x\in (0,L)$ and $W_1(x)\not\equiv W_2(x)$, according to $\frac{\mathrm{d}W}{\mathrm{d}U}=\frac{\mathrm{d}}{\mathrm{d}U}(\frac{U}{q(U)})>0$, we have $U_1(x)\leq U_2(x)$ for all $x\in(0,L)$ and $U_1(x)\not\equiv U_2(x)$, which contradicts $\int_{0}^{L}U_1(x)\mathrm{d} x=\int_{0}^{L}U_2(x)\mathrm{d} x=m$.
	
	\textit{Case 2.} If $W_1(x)-W_2(x)$ changes sign only once, let's assume without loss of generality that it changes sign at a point $x_0\in(0,L)$. Then, we have $W_1(x_0)=W_2(x_0)$ and
	\begin{equation}\label{eq27}
		W_1'(x_0)\leq W_2'(x_0),
	\end{equation}
	due to \eqref{eq26}. Since $W_1(x)-W_2(x)$ changes sign only once, we get $W_1(x)>W_2(x)$ on $(0,x_0)$, and hence
	\begin{equation}\label{eq28}
		U_1(x)>U_2(x)\quad \text{on } (0,x_0),	
	\end{equation}
	because $W(U)$ is monotonically increasing in $U$. On the other hand, in view of \eqref{W euq} and the second equation of \eqref{transformed steady state problem}, one obtains
	\begin{equation}\nonumber
		\begin{aligned}
			W_1^{\prime}(x_0)&=\chi W_1(x_0)V_1'(x_0)=\chi W_2(x_0)V_1'(x_0)=\chi W_2(x_0)\int_{0}^{x_0}U_1 V_1(s)\mathrm{d}s\\&>\chi W_2(x_0)\int_{0}^{x_0}U_2 V_2(s)\mathrm{d}s =\chi W_2(x_0)V_2'(x_0)=W_2^{\prime}(x_0),	\end{aligned}
	\end{equation}
	where we have used \eqref{V_1>V_2} and \eqref{eq28} for the inequality. This contradicts to \eqref{eq27}.
	
	\textit{Case 3.} If $W_1(x)-W_2(x)$ changes sign at least twice, we take $x_0$ and $x_1$ with $x_0< x_1$ as the last two points where $(W_1-W_2)(x)$ changes sign. Then, we have
	\begin{equation}\label{eq29}
		W_1(x_0)=W_2(x_0),\quad W_1(x_1)=W_2(x_1),
	\end{equation}
	and
	\begin{equation}\label{eq30}
		W_1'(x_0)\geq W_2'(x_0),\quad W_1'(x_1)\leq W_2'(x_1),
	\end{equation}where we have used \eqref{eq26} in \eqref{eq30}.
	As demonstrated in the proof of \eqref{eq28}, it holds that
	\begin{equation}\label{eq31}
		U_1(x)>U_2(x)\quad \text{on } (x_0,x_1).	
	\end{equation}
	Recalling that $W^\prime=\chi W V^\prime$, combining \eqref{eq29} and \eqref{eq30}, we deduce that
	\begin{equation}\nonumber
		V_1'(x_0)\geq V_2'(x_0),	
	\end{equation}
	which along with the second equation of \eqref{transformed steady state problem}, \eqref{V_1>V_2} and \eqref{eq31}, gives rise to
	\begin{equation}\label{eq32}
		V_1'(x_1)=V_1'(x_0)+\int_{x_0}^{x_1}U_1 V_1(s)\mathrm{d}s>V_2'(x_0)+\int_{x_0}^{x_1}U_2V_2(s)\mathrm{d}s=V_2'(x_1).	
	\end{equation}
	By \eqref{eq29} and \eqref{eq32}, we thus get
	\begin{equation}
		W_1'(x_1)=\chi W_1(x_1)	V_1'(x_1)= \chi W_2(x_1)V_1'(x_1)>\chi W_2(x_1)V_2'(x_1)=W_2'(x_1),
	\end{equation}
	which contradicts to the second inequality of \eqref{eq30}.
	
	All of the three cases show the contradictions, which implies $\lambda_1<\lambda_2$ is not true, leading to the conclusion that only $\lambda_1=\lambda_2$ holds. We thus obtain the uniqueness of the steady state.
\end{proof}

In order to study the asymptotic profile of the steady states, we present some elementary estimates for $(U,V)$.

\begin{lemma}
	Let $(U,V)$ be the steady state solution of \eqref{original problem}-\eqref{original boundary condition} stated in Theorem \ref{existence theorem}. Then it holds that
	\begin{equation}\label{U_x,V_x>0}
		V^\prime\geq0,~ U^\prime\geq0,\quad x\in [0,L],
	\end{equation}
	\begin{equation}\label{inequality}
		\left(V^\prime \right)^2\leq UV^2,\quad x\in [0,L],
	\end{equation}
	\begin{equation}\label{V_xU_xV_xx,U_xx}
		|V^\prime|\leq C,~|V^{\prime\prime}|\leq C,\quad x\in [0,L],
	\end{equation}where $C>0$ is a constant independent of $\chi$.
\end{lemma}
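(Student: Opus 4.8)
The plan is to establish the three assertions in the order in which they are stated, since each one feeds into the next. For the monotonicity \eqref{U_x,V_x>0} I would start with $V'$. By \eqref{UV} we have $U>0$ and $V>0$ on $[0,L]$, so the second equation of \eqref{transformed steady state problem}, namely $V''=UV$, forces $V''>0$; thus $V$ is strictly convex. Combined with the Neumann condition $V'(0)=0$, integration yields $V'(x)=\int_0^x UV\,\mathrm{d}s\geq 0$. For $U'$ I would differentiate the algebraic relation \eqref{3.3}, $G(U)=\lambda\mathrm{e}^{\chi V}$ with $G(U)=U/q(U)$: this gives $G'(U)U'=\chi\lambda\mathrm{e}^{\chi V}V'=\chi G(U)V'$, hence $U'=\chi G(U)V'/G'(U)\geq 0$, because $G>0$, $G'>0$ (as shown in Step 1 of the existence proof), $\chi>0$ and $V'\geq 0$.

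For the pointwise inequality \eqref{inequality} I would introduce the auxiliary function $W(x)\triangleq (V')^2-UV^2$ and compute its derivative. Using $V''=UV$, the two terms $2V'V''=2UVV'$ and $-2UVV'$ cancel exactly, leaving the clean identity $W'=-U'V^2$. By the monotonicity $U'\geq 0$ just established, $W'\leq 0$, so $W$ is non-increasing on $[0,L]$. At the left endpoint the Neumann condition gives $W(0)=(V'(0))^2-U(0)V(0)^2=-U(0)V(0)^2\leq 0$. Therefore $W(x)\leq W(0)\leq 0$ for all $x\in[0,L]$, which is precisely \eqref{inequality}. This cancellation is the one place where any genuine insight is needed, and I expect it to be the main obstacle; everything else is routine.

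Finally, the $\chi$-independent bounds \eqref{V_xU_xV_xx,U_xx} follow immediately from the two preceding steps. From $V''=UV$ together with $0<U<K$ and $0<V\leq b$ we obtain $0\leq V''\leq Kb$, so $|V''|\leq Kb$ with a constant independent of $\chi$. For the first derivative, the inequality \eqref{inequality} gives $(V')^2\leq UV^2\leq Kb^2$, hence $|V'|\leq\sqrt{K}\,b$; alternatively, integrating $V'=\int_0^x UV\,\mathrm{d}s\leq b\int_0^L U\,\mathrm{d}s=bm$ yields the same kind of uniform control. Taking $C$ to be the larger of these constants completes the proof, and I would stress that the crucial point is that none of these bounds involves $\chi$, which is exactly what the subsequent compactness/limit argument for the asymptotic profile will require.
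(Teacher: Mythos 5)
Your proposal is correct, and for the first and third assertions it coincides with the paper's argument: $V''=UV>0$ plus $V'(0)=0$ gives $V'\geq0$, the steady-state relation gives $U'=\chi q(U)U V'/(q(U)-q'(U)U)\geq0$ (your formula $U'=\chi G(U)V'/G'(U)$ is the same expression), and the uniform bounds $|V'|\leq\sqrt{K}b$, $|V''|\leq Kb$ are read off exactly as you do. Where you genuinely diverge is in the proof of \eqref{inequality}. The paper multiplies $V''=UV$ by $V'$, integrates over $(0,x)$, and then changes variables to $s=V(y)$ using the algebraic relation \eqref{3.3}, so that $\int_0^x UVV'\,\mathrm{d}y=\int_{V(0)}^{V(x)}G^{-1}(\lambda\mathrm{e}^{\chi s})s\,\mathrm{d}s$; an integration by parts in $s$ together with $(G^{-1})'>0$ then yields the bound by $UV^2/2$. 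You instead observe that $W=(V')^2-UV^2$ satisfies $W'=-U'V^2\leq0$ after the exact cancellation of $2V'V''=2UVV'$ against $-2UVV'$, and conclude from $W(0)=-U(0)V(0)^2\leq0$. Your route is more elementary: it uses only $V''=UV$ and the already-established monotonicity $U'\geq0$, and does not invoke the specific structure $U=G^{-1}(\lambda\mathrm{e}^{\chi V})$ at all; it is equivalent to integrating $\int_0^xUVV'\,\mathrm{d}y$ by parts in $x$ rather than in $V$. The paper's version, by contrast, keeps the argument entirely inside the parametrization by $\lambda$ that drives the existence proof. Both are valid; yours would generalize to any situation where one knows $U'\geq0$ and $V''=UV$ without knowing the precise functional relation between $U$ and $V$.
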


\begin{proof}
	Since $(U,V)(x)>0$ for all $x\in[0,L]$, by the second equation of \eqref{transformed steady state problem}, we get $V^{\prime\prime}(x)>0$ for all $x\in[0,L]$, which implies $V^\prime(x)>0$ since $V^\prime(0)=0$. Noting $q(U)>0$ and $q'(U)<0$ for $U\in(0,K)$, the first equation of \eqref{transformed steady state problem} gives
	\begin{equation*}
		U^\prime=\frac{\chi q(U)U}{q(U)-q'(U)U}V^\prime\geq0 \text{ for all } x\in[0,L].
	\end{equation*}	
	Multiplying the second equation of \eqref{steady state problem} by $V^\prime$ and integrating over $(0,x)$, we have
	\begin{equation}\nonumber
		\frac{\left(V^\prime \right)^2(x)}{2}=\int_0^x UVV^\prime\mathrm{d}y=\int_{V(0)}^{V(x)}G^{-1}\left(\lambda\mathrm{e}^{\chi s}\right)s\mathrm{d}s\leq \int_{0}^{V(x)}G^{-1}\left(\lambda\mathrm{e}^{\chi s}\right)s\mathrm{d}s.
	\end{equation}
	Hence, we get, thanks to $V>0$ and $G'>0$  that
	\begin{equation}\nonumber
		\frac{\left(V^\prime \right)^2}{2}\leq \int_{0}^{V}G^{-1}\left(\lambda\mathrm{e}^{\chi s}\right)s\mathrm{d}s=\frac{G^{-1}\left(\lambda\mathrm{e}^{\chi V}\right)V^2 }{2}-\frac{1}{2}\int_0^V\frac{\lambda\chi\mathrm{e}^{\chi s}}{G'\left(\lambda\mathrm{e}^{\chi s}\right)}s^2 \mathrm{d}s\leq \frac{G^{-1}\left(\lambda\mathrm{e}^{\chi V}\right)V^2 }{2}= \frac{UV^2 }{2}.
	\end{equation}
	Based on this inequality, it holds
	\[|V^\prime(x)|\leq|U|^{\frac{1}{2}}V(x)\leq\sqrt{K}b, \ x\in[0,L].\]
	The second equation of \eqref{transformed steady state problem} leads to
	$$|V^{\prime\prime}(x)|=UV(x)\leq Kb, \ x\in[0,L].$$ The proof is completed.
\end{proof}

\begin{proof}[Proof of Theorem \ref{existence theorem}--asymptotic profile of $(U,V)$] Since $0<U(x)<K$  for all $x\in[0,L]$ and $U$ is monotonically increasing on $[0,L]$, by Helly's compactness theorem, there exists $U_\infty\in L^\infty(0,L)$ such that $$U\rightarrow U_\infty \text{ pointwisely on } [0,L] \text{ as } \chi\rightarrow\infty.$$ By Lebesgue Dominated Convergence Theorem, we have $\int_{0}^{L} U_{\infty}(x)\mathrm{d}x=m$. Moreover,
	$$0\leq U_\infty(x)\leq K \text{ on }(0,L).$$
	
	By \eqref{UV} and \eqref{V_xU_xV_xx,U_xx}, there exists a constant $M$ independent of $\chi$ such that $\|V\|_{C^2[0,L]}\leq M$. According to Arzela-Ascoli theorem, there exists $V_{\infty}\in  W^{2,\infty}\left( 0,L\right)$ such that $$V\rightarrow V_{\infty} \text{ strongly in } W^{1,\infty}\left( 0,L\right) \text{ as } \chi\rightarrow\infty.$$
	
	We next characterize the formula of $(U_\infty,V_\infty)$. Dividing the first equation of \eqref{transformed steady state problem} by $\chi$, it holds that
	\begin{equation}\label{eq33}
		\frac{1}{\chi}\left(q(U)-q^{\prime}(U) U\right) U^\prime= q(U) U V^\prime.	
	\end{equation}
	Denote by $g(U)\triangleq\int_0^Uq(s)s(q(s)-q'(s)s)ds$, then since $q(s)$ is smooth on $[0,K]$, it is easy to see that $|g(U)|\leq C$ for some constant $C$ independent of $\chi$. Now multiplying \eqref{eq33} by $(q(U)-q^{\prime}(U) U)U^\prime$ and integrating the equation on $(0,L)$, we get
	\begin{equation*}\nonumber
		\begin{split}
			\frac{1}{\chi}\int_0^L \left(q(U)-q^{\prime}(U) U\right)^2 \left(U^\prime \right)^2&= \int_0^L q(U) U\left(q(U)-q^{\prime}(U) U\right) U^\prime V^\prime=\int_0^L g(U)_xV^\prime\\&=g(U)V^\prime\Big|_0^L-\int_0^L g(U)V^{\prime\prime}\\&=g(U)V^\prime(L)-\int_0^L g(U)UV\\&\leq C,\end{split}
	\end{equation*} where we have used \eqref{V_xU_xV_xx,U_xx} and \eqref{UV} in the last inequality.
	Thus,
	\begin{equation}\label{3.25}
		\begin{aligned}
			\int_0^L \frac{ 1}{\chi^2}\left(q(U)-q^{\prime}(U) U\right)^2 \left( U^\prime\right)^2&=\frac{1}{\chi}\cdot\frac{1}{\chi}\int_0^L\left(q(U)-q^{\prime}(U) U\right)^2\left( U^\prime\right)^2\\&\leq \frac{C}{\chi}\rightarrow 0\quad \text{as } \chi\rightarrow\infty.
	\end{aligned}\end{equation}
	Multiplying \eqref{eq33} by $\omega\in C_0^\infty(0,L)$, we get
	\begin{equation}\label{3.26}
		\int_0^L\frac{ 1}{\chi}\left(q(U)-q^{\prime}(U) U\right) U^\prime \omega =\int_0^Lq(U) U V^\prime\omega.
	\end{equation} Letting $\chi\rightarrow\infty$ in \eqref{3.26}, by \eqref{3.25}, we have
	\begin{equation*}
		\int_0^Lq(U_\infty) U_\infty V_\infty^\prime \omega=0,\quad \forall \omega \in C_0^\infty(0,L).
	\end{equation*} Thus,
	\begin{equation}\label{qUinftyUinfty Vinfty x}
		q(U_\infty)	U_\infty V_\infty^\prime=0 \text{ on } (0,L).
	\end{equation}
	Applying a similar argument on the second equation of \eqref{transformed steady state problem} leads to
	\begin{equation}\label{Vinfty xxUinfty Vinfty}
	V_\infty^{\prime\prime}=U_\infty V_\infty \text{ on } (0,L).
	\end{equation}

	We next show that
	\begin{equation}\label{Uinfty}
		U_{\infty}(x)=\begin{cases}
			0,~&x \in\left[0,L-\frac{m}{K}\right), \\
			K,~&x \in\left(L-\frac{m}{K}, L\right].
		\end{cases}
	\end{equation}	We prove \eqref{Uinfty} by a contradiction argument.
	If there exists	an $x_1\in \left(L-\frac{m}{K}, L\right]$ such that $U_\infty(x_1)<K$, then by the monotonicity of $U_\infty(x)$, we get $U_\infty(x)<K$ and $q(U_\infty(x))>0$ for any $x \in [0,x_1]$. Therefore, due to \eqref{qUinftyUinfty Vinfty x}, it gives
	\begin{equation}\label{Uinfty Vinfty x}
		U_\infty V_\infty^\prime(x)=0\quad \text{on }[0,x_1].
	\end{equation}
	We next show that $U_\infty(x)\equiv0$ on $[0,x_1]$. Otherwise, there exists an $x_2\in [0,x_1]$ such that $U_\infty(x_2)>0$. Multiplying \eqref{Vinfty xxUinfty Vinfty} by $V_\infty^\prime$, we then get
	\begin{equation}
		\frac{\left(V^\prime_{\infty }\right) ^2(x)}{2}=\int_{0}^{x}U_\infty V_\infty V_\infty^\prime ds=0\quad \text{on }[0,x_1],
	\end{equation}
	where we have used \eqref{Uinfty Vinfty x}. Thus, $V_\infty^\prime(x)\equiv0$ on $[0,x_1]$. Utilizing \eqref{Vinfty xxUinfty Vinfty} again, one obtains $U_\infty V_\infty(x)\equiv0$ on $[0,x_1]$, which along with $U_\infty(x_2)>0$ and the monotonicity of $U_\infty(x)$, gives rise to $V_\infty(x)=0$ on $[x_2,x_1]$. Since $V_\infty(x)$ is monotone increasing, we also have $V_\infty(x)\equiv0$ on $[0,x_1]$. Now applying the standard energy estimate for the equation \eqref{Vinfty xxUinfty Vinfty}, we get
	\begin{equation}\nonumber
		V_{\infty}(x)\equiv0 \text{ on }[0,L],
	\end{equation}
	which contradicts to $V_{\infty}(L)=b$. Therefore, we obtain that $U_\infty(x)\equiv0$ on $[0,x_1]$. Now noting $x_1>L-\frac{m}{K}$, we have
	\begin{equation}\nonumber
		\int_{0}^{L} U_{\infty}(x)\mathrm{d}x=\int_{x_1}^{L} U_{\infty}(x)\mathrm{d}x\leq K\left( L-x_1\right)<m.
	\end{equation}
	This contradicts to the constraint $ \int_{0}^{L} U_{\infty}(x)\mathrm{d}x=m$ and we thus show that $$U_{\infty}(x)\equiv K \text{ on } \left(L-\frac{m}{K},L\right].$$

	If there exists	an $x_3\in \left[0,L-\frac{m}{K}\right)$ such that $U_\infty(x_3)>0$, then by  the monotonicity of $U_\infty(x)$, we have $U_\infty(x)>0$ on $[x_3,L]$. Now,
	\begin{equation}
		\begin{aligned}
			\int_{0}^{L} U_{\infty}(x)\mathrm{d}x&>\int_{x_3}^{L} U_{\infty}(x)\mathrm{d}x=\int_{x_3}^{L-\frac{m}{K}} U_{\infty}(x)\mathrm{d}x+\int_{L-\frac{m}{K}}^{L} U_{\infty}(x)\mathrm{d}x\\&=\int_{x_3}^{L-\frac{m}{K}} U_{\infty}(x)\mathrm{d}x+K\cdot\frac{m}{K}\\&>m,		\end{aligned}
	\end{equation}
	which contradicts to $ \int_{0}^{L} U_{\infty}(x)\mathrm{d}x=m$. Therefore, we prove that $$U_{\infty}(x)\equiv 0 \text{ on } \left[0,L-\frac{m}{K}\right).$$

	It remains to show the profile of $V$. In view of \eqref{Vinfty xxUinfty Vinfty}-\eqref{Uinfty}, we find that $V_{\infty}(x)\in W^{2,\infty}(0,L)$ satisfies
	\begin{equation}\label{Vinfty}
		\begin{cases}
			V_\infty^{\prime\prime}=KV_{\infty},\quad &x\in \left(L-\frac{m}{K},L \right),\\
		V_\infty^{\prime\prime}=0 ,\quad &x\in \left(0,L-\frac{m}{K} \right),	\\
			V'_{\infty}(0)=0, \ V_{\infty}(L)=b.
		\end{cases}
	\end{equation}	
	When $x\in \left(0,L-\frac{m}{K} \right)$, due to 	$V'_{\infty}(0)=0$, we have
	\begin{equation}\nonumber
		V_{\infty}(x)\equiv \text{const.}\quad x\in \left[0,L-\frac{m}{K} \right), \text{ and } V'_{\infty}(L-\frac{m}{K})=0.	
	\end{equation} On the other hand, we calculate $V_{\infty}(x)$ on $\left(L-\frac{m}{K},L \right)$ from the first equation of \eqref{Vinfty} and boundary conditions $V_{\infty}(L)=b$ and $V'_{\infty}(L-\frac{m}{K})=0$ that
	\begin{equation}\label{eq35}
		V_{\infty}(x)=C_1 \mathrm{e}^{-\sqrt{K} x}+C_2 \mathrm{e}^{\sqrt{K} x},\quad x \in\left[L-\frac{m}{K}, L\right],
	\end{equation}
	where $C_1=b\mathrm{e}^{\sqrt{K}L}/(1+\mathrm{e}^{\frac{2m}{\sqrt{K}}})$ and $C_2=b\mathrm{e}^{-\sqrt{K}L+\frac{2m}{\sqrt{K}}}/(1+\mathrm{e}^{\frac{2m}{\sqrt{K}}})$. Since $V_\infty(x)$ is continuous, we get from \eqref{eq35} that
	\begin{equation}\label{eq36}
		V_{\infty}(x)\equiv V_{\infty}(L-\frac{m}{K})=\frac{2b\mathrm{e}^{\frac{m}{\sqrt{K}}}}{1+\mathrm{e}^{\frac{2m}{\sqrt{K}}}} ,\quad x\in \left[0,L-\frac{m}{K} \right].	
	\end{equation}
	Combining \eqref{eq35} and \eqref{eq36}, we get the profile of $V$ as $\chi \rightarrow\infty$.
\end{proof}

\section{Nonlinear stability of the steady state}\label{nonlear stability}

In this section, we shall study the nonlinear stability of the steady state $(U,V)$. To this end, we first reformulate the problem by taking the anti-derivative for the density $u$.

\subsection{Reformulation of the problem}

For convenience, we write the first equation of \eqref{original problem} as
\begin{equation}\nonumber
	u_t=\left[\left(q(u)-q^{\prime}(u) u\right) u_x-\chi q(u) u v_x\right]_x\triangleq\left[D(u) u_x-\chi S(u) v_x\right]_x,	
\end{equation}
where
\begin{equation}\label{P H}
	D(u)\triangleq q(u)-q^{\prime}(u) u,\quad S(u)\triangleq q(u) u.
\end{equation}
It is easy to see that $D(u)>0$ and $S(u)>0$ if $u\in(0,K)$, and both are smooth on $[0,K]$. Then, the model \eqref{original problem} can be expressed as
\begin{equation}\label{transformed problem}
	\begin{cases}u_t=\left[D(u) u_x-\chi S(u) v_x\right]_x, & x \in(0,L), \\ v_t=v_{x x}-u v, & x \in(0,L),\\ \left( u,v\right)\left(x_0,0\right)=\left(u_0,v_0\right)(x),& x \in(0,L),\end{cases}
\end{equation}
and the steady state satisfies	
\begin{equation}\label{transformed steady state problem plus}
	\begin{cases}{\left[D(U) U^\prime-\chi S(U)V^\prime\right]^\prime=0,} & x \in(0,L), \\ V^{\prime\prime}=U V, & x \in(0,L).\end{cases}
\end{equation}
Recalling the no-flux boundary condition for $u$, we know that the cell mass is conserved, which along with the fact $\int_0^L U(x) d x=\int_0^L u_0(x) d x$ implies that
\begin{equation}\label{zero mass}
	\int_0^L \left(u(x,t)-U(x)\right) d x=0, \ \ \forall t>0.
\end{equation}
We thus employ the anti-derivative technique to study the asymptotic stability of steady state $(U,V)$. Define
$$
\phi(x, t)\triangleq\int_0^x(u(y, t)-U(y)) \mathrm{d} y, \quad \psi(x, t)\triangleq v(x, t)-V(x),
$$
that is
\begin{equation}\label{decompose}
	u(x, t)=\phi_x(x, t)+U(x),\quad v(x, t)=\psi(x, t)+V(x).
\end{equation}	
Now substituting \eqref{decompose} into \eqref{transformed problem}, integrating the equation over $(0,x)$, and using \eqref{transformed steady state problem plus}, we derive the equation of $(\phi,\psi)$:	
\begin{equation}\label{phipsi equ}
	\begin{cases}
		\phi_t=&D(U) \phi_{x x}+\left(D'(U)U^\prime-\chi S'(U)V^\prime\right)\phi_x
		-\chi S(U)\psi_{ x}\\&+\left( D\left( \phi_x+U\right)-D(U)\right)\phi_{xx}+\left(D\left( \phi_x+U\right)-D(U)-D'(U)\phi_{ x}\right)U^\prime\\&-\chi\left(S\left( \phi_x+U\right)-S(U) \right)\psi_{ x}-\chi\left(S\left( \phi_x+U\right)-S(U)-S'(U)\phi_{ x}\right)V^\prime,\\
		\psi_t=&\psi_{x x}-U \psi-V \phi_x-\phi_x \psi,
	\end{cases}
\end{equation}where we have used the decomposition
\begin{equation}\nonumber
	\begin{split}
		&D\left( \phi_x+U\right)-D(U)=D'(U)\phi_{ x}+\left(D\left( \phi_x+U\right)-D(U)-D'(U)\phi_{ x}	 \right),\\
		&S\left( \phi_x+U\right)-S(U)=S'(U)\phi_{ x}+\left(S\left( \phi_x+U\right)-S(U)-S'(U)\phi_{ x}\right).
\end{split}\end{equation}
The initial value of \eqref{phipsi equ} is
\begin{equation}\label{phi0psi0}
	(\phi, \psi)(x, 0)\triangleq\left(\phi_0, \psi_0\right)(x)=\left(\int_0^x\left(u_0(y)-U(y)\right) \mathrm{d} y, v_0-V\right).
\end{equation}	
Owing to \eqref{original boundary condition}, \eqref{steady state boundary conditon} and \eqref{zero mass}, the boundary conditions	of  \eqref{phipsi equ} are given by
\begin{equation}\label{phipsi boundary condition}
	\left(\phi, \psi_x\right)(0, t)=(0,0),\quad(\phi, \psi)(L, t)=(0,0).
\end{equation}

\subsection{\textit{A priori} estimates}	
We next establish the global existence of solutions for the problem \eqref{phipsi equ}-\eqref{phipsi boundary condition}. For any $T>0$, we search for solutions of \eqref{phipsi equ}-\eqref{phipsi boundary condition} in the following space
\begin{equation}\nonumber
	\begin{aligned}
		X(0, T)\triangleq\{(\phi, \psi) \mid \phi & \in C\left([0, T] ; H_0^1 \cap H^2\right) \cap L^2\left(0, T ; H^3\right), \\
		\psi & \left.\in C\left([0, T] ; H^1\right) \cap \in L^2\left(0, T ; H^2\right)\right\}.
	\end{aligned}
\end{equation}
We have the following global well-posedness result.

\begin{proposition}\label{global existence} Let $\chi>0$ be fixed. Assume that \textbf{(A)} holds.
	Suppose $\phi_0\in H_0^1 \cap H^2$ and $\psi_0\in H^1$ with $\psi_0(L)=0$. Then there exists a positive constant $\epsilon$ such that if $\left\|\phi_0\right\|_{H^2}^2+\left\|\psi_0\right\|_{H^1}^2\leq \epsilon$, then the problem \eqref{phipsi equ}-\eqref{phipsi boundary condition} has a unique global solution $(\phi,\psi)\in X(0,\infty)$ satisfying for all $t\geq 0$,
	\begin{equation}\label{global estimate}
		\|\phi(\cdot, t)\|_{H^2}^2+\|\psi(\cdot, t)\|_{H^1}^2 \leq C\mathrm{e}^{-\alpha t}\left(\|\phi_0\|_{H^2}^2+\|\psi_0\|_{H^1}^2 \right),
	\end{equation}
	for some constants $\alpha>0$ and $C>0$ independent of $t$.
\end{proposition}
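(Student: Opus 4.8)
The plan is to prove Proposition \ref{global existence} by combining a standard local existence result with a global \emph{a priori} estimate, closed by a continuation argument. First I would establish local well-posedness in $X(0,T)$: for fixed $\chi$ the steady state satisfies $0<\min_{[0,L]}U\le U\le\max_{[0,L]}U<K$ by \eqref{UV}, so $D(U),S(U)$ are bounded with positive lower bounds, the $\phi$-equation in \eqref{phipsi equ} is uniformly parabolic and the $\psi$-equation is a heat equation with bounded lower-order coefficients; a contraction mapping in $X(0,T)$ then yields a unique local solution on a short interval. The heart of the proof is the \emph{a priori} estimate, for which I would introduce the \emph{a priori} assumption
\[
N(T)^2\triangleq\sup_{0\le t\le T}\left(\|\phi(\cdot,t)\|_{H^2}^2+\|\psi(\cdot,t)\|_{H^1}^2\right)\le\delta^2\ll1.
\]
Since $\|\phi_x\|_{L^\infty}\le C\|\phi\|_{H^2}$ in one dimension, smallness of $N(T)$ keeps $u=\phi_x+U$ in a compact subinterval of $(0,K)$, so $D,S$ remain smooth and uniformly positive along the solution and the remainders $D(\phi_x+U)-D(U)-D'(U)\phi_x$ and $S(\phi_x+U)-S(U)-S'(U)\phi_x$ appearing in \eqref{phipsi equ} are genuinely quadratic in $\phi_x$.

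The decisive ingredient is the cancellation alluded to in the introduction. At the base level I would test the first equation of \eqref{phipsi equ} with $\phi$ and the second with $w\psi$, where the weight is dictated by the steady-state relation $D(U)U'=\chi S(U)V'$ from \eqref{transformed steady state problem plus}, namely
\[
w(x)\triangleq\frac{\chi S(U)}{V}>0,
\]
which is well defined, smooth, and bounded above and below since $S(U)>0$ and $V\ge V(0)>0$. The two problematic coupling terms are $-\chi\int S(U)\psi_x\phi$ from the $\phi$-equation and $-\int wV\phi_x\psi$ from the $\psi$-equation; integrating the first by parts (the boundary contribution vanishes because $\phi(0)=\phi(L)=0$ by \eqref{phipsi boundary condition}) produces $+\chi\int S(U)\phi_x\psi$, which cancels exactly against $-\int wV\phi_x\psi$ by the choice of $w$, leaving only the lower-order term $\chi\int(S(U))'U'\phi\psi$. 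The favorable signs $-\int D(U)\phi_x^2$, $-\int w\psi_x^2$ and the consumption contribution $-\int wU\psi^2$ (with $U>0$) supply the dissipation at this level.

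Next I would propagate these estimates to higher order: differentiating \eqref{phipsi equ} once and twice in $x$ and testing against $-\phi_{xx}$ and $\partial_x^2\phi$, and the differentiated $\psi$-equation against $\psi_{xx}$, while reusing the same cancellation structure at each level, yields dissipation controlling $\|\phi_x\|^2+\|\phi_{xx}\|^2+\|\phi_{xxx}\|^2$ and $\|\psi_x\|^2+\|\psi_{xx}\|^2$, consistent with $\phi\in L^2(H^3)$ and $\psi\in L^2(H^2)$. Assembling a suitable equivalent energy $E(t)\sim\|\phi\|_{H^2}^2+\|\psi\|_{H^1}^2$ with dissipation $D(t)$, all nonlinear and commutator terms are bounded by $CN(T)\,D(t)$ via the quadratic remainders and one-dimensional Sobolev embedding, so for $\delta$ small they are absorbed and $\frac{d}{dt}E+\frac12 D\le0$. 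Since $\phi$ vanishes at both endpoints and $\psi(L)=0$, the Poincar\'e inequality gives $D\ge cE$, whence $\frac{\mathrm d}{\mathrm dt}(\mathrm e^{\alpha t}E)\le0$ for some $\alpha>0$ and thus \eqref{global estimate}; this bound then shows $N(T)\le C\sqrt{\epsilon}$, strictly better than the \emph{a priori} assumption when $\epsilon$ is small, so a standard continuation argument extends the solution to $X(0,\infty)$.

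I expect the main obstacle to be closing the higher-order estimates while preserving the cancellation: at each differentiation the coupling terms reappear with derivatives landing on the coefficients $D(U),S(U),V$ and on the weight $w$, generating numerous commutator terms, and one must organize them so that the dissipation — including the sign-definite term $-\int wU\psi^2$ and the parabolic term $\int D(U)\phi_{xx}^2$ — dominates after absorbing the $O(N(T))$ nonlinearities. A secondary but essential point is to verify that every boundary contribution produced by the repeated integrations by parts vanishes or is controllable under \eqref{phipsi boundary condition}.
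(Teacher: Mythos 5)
Your overall architecture (local existence by a fixed point argument, weighted energy estimates under an \emph{a priori} smallness assumption, exponential decay via Poincar\'e, and a continuation argument) is exactly the paper's, and you correctly identify that everything hinges on a cancellation between the chemotactic coupling $-\chi S(U)\psi_x$ and the consumption coupling $-V\phi_x$. However, your choice of multipliers does not produce the exact cancellation you claim, and the basic energy estimate does not close. Testing the first equation of \eqref{phipsi equ} with $\phi$ (rather than with $\phi/S(U)$, as the paper does) leaves two uncancelled, sign-indefinite terms that are genuinely \emph{quadratic} in the perturbation: (i) integrating $-\chi\int S(U)\psi_x\phi$ by parts yields $\chi\int S(U)\phi_x\psi+\chi\int S'(U)U'\phi\psi$, and only the first piece cancels against $-\int wV\phi_x\psi$; (ii) the convection term $\int\left(D'(U)U'-\chi S'(U)V'\right)\phi_x\phi$ combines with the commutator $-\int D'(U)U'\phi_x\phi$ coming from the diffusion term to leave $-\chi\int S'(U)V'\phi_x\phi=\tfrac{\chi}{2}\int\left(S'(U)V'\right)_x\phi^2$. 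Both leftovers carry coefficients of size $\chi\|U'\|_{L^\infty}$ and $\chi\|V'\|_{L^\infty}$ (and $U'$ is itself of order $\chi V'$ by \eqref{transformed steady state problem plus}), while the available dissipation $\int D(U)\phi_x^2+\int w\psi_x^2+\int wU\psi^2$ has fixed constants. Since these terms are quadratic rather than cubic, the smallness of $N(T)$ gives you nothing to absorb them with; Cauchy--Schwarz plus Poincar\'e would close the inequality only under a restriction on the parameters, contrary both to your claim and to the parameter-free statement of the proposition.

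The repair is precisely the paper's weighting: multiply the first equation by $\phi/S(U)$ and the second by $\chi\psi/V$. Then the cross terms become $-\chi\int\psi_x\phi-\chi\int\phi_x\psi=-\chi\int(\phi\psi)_x=0$ with \emph{no} remainder, and the first-order coefficient $-\bigl(\tfrac{D(U)}{S(U)}\bigr)_x+\tfrac{D'(U)U'-\chi S'(U)V'}{S(U)}$ vanishes identically by the steady-state relation $D(U)U'=\chi S(U)V'$ (this is \eqref{eq4}), so that every term remaining on the right-hand side of the resulting identity \eqref{eq2} is cubic in the perturbation and is absorbed by $\sqrt{\delta_1}$-smallness. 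A secondary point your sketch glosses over: any weight on the $\psi$-equation generates, after two integrations by parts, a zeroth-order contribution $\tfrac12\int w_{xx}\psi^2$ competing with $-\int wU\psi^2$, and its favorable sign is not automatic; the paper secures it through the pointwise steady-state inequality $(V')^2\le UV^2$ of \eqref{inequality} (see \eqref{eq3}), and the analogous verification would be required for your weight $w=\chi S(U)/V$.
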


The global existence of $(\phi,\psi)$, as stated in Proposition \ref{global existence}, can be treated by the energy method based on local existence with the \textit{a priori} estimates.

\begin{proposition}[Local existence]\label{pro local existence}
	Let $\chi>0$ be fixed. Assume that \textbf{(A)} holds. Suppose $\phi_0\in H_0^1 \cap H^2$ and $\psi_0\in H^1$ with $\psi_0(L)=0$. For any $\delta_0>0$, there exists a positive constant $T_0>0$ depending on $\delta_0$ such that if $\|\phi_0\|_{H^2}^2+\|\psi_0\|_{H^1}^2\leq \delta_0$, then the problem \eqref{phipsi equ}-\eqref{phipsi boundary condition} has a unique global solution $(\phi,\psi)\in X(0,T_0)$ satisfying
	\begin{equation}\label{local estimate}
		\|\phi(\cdot, t)\|_{H^2}^2+\|\psi(\cdot, t)\|_{H^1}^2 \leq 2\left(\|\phi_0\|_{H^2}^2+\|\psi_0\|_{H^1}^2 \right),
	\end{equation}
	for any $0\leq t\leq T_0$.
\end{proposition}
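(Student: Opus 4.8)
The plan is to construct the solution on a short interval by a linearization-and-iteration scheme combined with the Banach fixed point theorem, exploiting the fact that the steady state keeps the principal part of the $\phi$-equation strictly parabolic. First I would record the structural facts that make this work: since $(U,V)$ is the steady state from Theorem \ref{existence theorem}, \eqref{UV} together with the monotonicity of $U$ gives $0<U(0)=\min_{[0,L]}U$ and $\max_{[0,L]}U=U(L)<K$, so the coefficients $D(U),S(U)$ in \eqref{P H} are smooth, bounded, and bounded below on $[0,L]$. Moreover, by the hypotheses inherited from Theorem \ref{stability theorem} the initial datum satisfies $\phi_{0,x}+U=u_0$, which lies in a compact subset of $[0,K)$; hence, for $\|\phi_0\|_{H^2}$ controlled and $t$ small, the quantity $\phi_x+U$ stays in a compact subset of $[0,K)$ on which $D$ is smooth and bounded below. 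This is precisely what guarantees uniform parabolicity of the quasilinear $\phi$-equation along the iteration.

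Next I would set up the iteration map. On the space $X(0,T)$ I would take the closed ball $B$ of those $(\bar\phi,\bar\psi)$ with $\sup_{[0,T]}(\|\bar\phi\|_{H^2}^2+\|\bar\psi\|_{H^1}^2)\le 2\delta_0$ (together with the corresponding $L^2(0,T;H^3)$, $L^2(0,T;H^2)$ bounds), and, given $(\bar\phi,\bar\psi)\in B$, define $(\phi,\psi)=\mathcal{T}(\bar\phi,\bar\psi)$ as the solution of the two decoupled linear parabolic problems obtained by freezing all nonlinear factors of \eqref{phipsi equ} at the barred functions. Concretely, $\psi$ solves
\[
\psi_t=\psi_{xx}-(U+\bar\phi_x)\psi-V\bar\phi_x,\qquad \psi_x(0,t)=0,\ \psi(L,t)=0,
\]
while $\phi$ solves the linear equation whose principal coefficient is $D(\bar\phi_x+U)$, with the remaining drift, source, and $\bar\psi_x$-terms frozen, under $\phi(0,t)=\phi(L,t)=0$. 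Using $H^2\hookrightarrow C^1$ so that $\bar\phi_x\in C([0,T];C^0)$ and $D(\bar\phi_x+U)$ is an admissible uniformly parabolic coefficient, standard linear parabolic theory (with the compatibility of $(\phi_0,\psi_0)$ against \eqref{phipsi boundary condition}) yields a unique $(\phi,\psi)\in X(0,T)$.

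I would then verify the two properties the fixed point theorem requires. For the self-mapping property I would run the energy estimates for the linear problems up to the order dictated by $X(0,T)$, respecting the mixed Dirichlet--Neumann boundary conditions in each integration by parts, and absorb the frozen nonlinear source terms using the radius bound on $B$; because every genuinely nonlinear contribution acquires a factor $\int_0^T$ after time integration, choosing $T_0=T_0(\delta_0)$ small forces $\mathcal{T}(B)\subset B$ and delivers exactly the factor-$2$ bound \eqref{local estimate}. For the contraction I would estimate the difference $\mathcal{T}(\bar\phi_1,\bar\psi_1)-\mathcal{T}(\bar\phi_2,\bar\psi_2)$ in the weaker metric of $C([0,T];L^2)\cap L^2(0,T;H^1)$, again gaining a small factor from the short time, and control the quasilinear discrepancy $D(\bar\phi_{1,x}+U)-D(\bar\phi_{2,x}+U)$ by the mean value theorem together with the uniform $H^2$-bounds available on $B$. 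Banach's fixed point theorem then produces a unique fixed point, which is the desired local solution, and the bound surviving the self-mapping step is \eqref{local estimate}.

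The main obstacle I anticipate is twofold, and both points live in the $\phi$-equation. First, the equation is quasilinear, so the principal coefficient $D(\phi_x+U)$ must be kept uniformly bounded below; this is where I must use that $u_0=\phi_{0,x}+U$ takes values in a compact subset of $[0,K)$ and that $T_0$ (and the ball radius) is small enough that $\phi_x+U$ cannot approach the degenerate values $0$ or $K$ (cf.\ the remark following Theorem \ref{stability theorem}). Second, the chemotactic coupling enters through $\psi_x$ via the terms $-\chi S(U)\psi_x$ and $-\chi\left(S(\phi_x+U)-S(U)\right)\psi_x$, which is borderline in the derivative count; freezing $\psi$ converts these into a source lying in $L^2(0,T;H^1)$, exactly the regularity needed for $\phi\in L^2(0,T;H^3)$, but the top-order energy estimate must be arranged so that this source is matched against the dissipation from $D(\bar\phi_x+U)\phi_{xx}$ rather than producing an uncontrolled loss of derivatives. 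Once these two issues are resolved, the remaining computations are routine short-time energy estimates.
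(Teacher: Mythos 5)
Your proposal is consistent with the paper, which gives no detailed proof of this proposition and simply asserts that local existence "can be shown by the standard fixed point theorem"; your linearize-freeze-and-contract scheme is precisely the standard argument being invoked. Your attention to keeping $\phi_x+U$ away from the degenerate value $K$ (note that $u=0$ is not degenerate for $D$, since $D(0)=q(0)>0$) is a sensible and correct elaboration of a point the paper leaves implicit.
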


The local existence of solutions to the problem \eqref{phipsi equ}-\eqref{phipsi boundary condition} can be shown by the standard fixed point theorem. To prove Proposition \ref{global existence}, we only need to establish the \textit{a priori} estimates.

\begin{proposition}[\emph{A priori} estimates] \label{proposition a priori estimates}  Let $\chi>0$ be fixed. Assume that \textbf{(A)} holds.
	For any $T>0$ and any solution $\left(\phi,\psi\right)\in X(0,T)$ of the problem \eqref{phipsi equ}-\eqref{phipsi boundary condition}, there exists a constant $\delta_1>0$, independent of $T$, such that if
	\begin{equation}\label{N(t)}
		\left\|\phi\left(\cdot,t\right)\right\|_{H^2}^2+\left\|\psi\left(\cdot,t\right)\right\|_{H^1}^2\leq \delta_1,\ \text{ for all } t\in[0,T],	
	\end{equation}
	then we have
	\begin{equation}\label{a priori estimate1}
		\|\phi(\cdot, t)\|_{H^2}^2+\|\psi(\cdot, t)\|_{H^1}^2 \leq C\mathrm{e}^{-\alpha t}\left(\|\phi_0\|_{H^2}^2+\|\psi_0\|_{H^1}^2 \right)\ \ \text{ for all } t\in[0,T],	
	\end{equation}and
	\begin{equation}\label{a priori estimate3}
		\int_0^t\mathrm{e}^{\alpha \tau}\left(\|\phi\|_{H^3}^2+\|\psi\|_{H^2}^2\right) \mathrm{d} \tau \leq C\left(\left\|\phi_0\right\|_{H^2}^2+\left\|\psi_0\right\|_{H^1}^2\right)\ \ \text{ for all } t\in[0,T],
	\end{equation}
	where $\alpha$ and $C$ are positive constants independent of $T$.
\end{proposition}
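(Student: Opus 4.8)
\textit{The plan and the main obstacle.} The plan is to run a weighted energy method and close a continuity/bootstrap argument on the \emph{a priori} assumption \eqref{N(t)}. First I would record its consequences: by the one-dimensional Sobolev embedding $\|\phi_x\|_{L^\infty}+\|\psi\|_{L^\infty}\leq C(\|\phi\|_{H^2}+\|\psi\|_{H^1})\leq C\sqrt{\delta_1}$, so that $u=U+\phi_x$ stays in a fixed compact subinterval of $(0,K)$ once $\delta_1$ is small. Here I use that, for fixed $\chi$, the steady state is uniformly bounded away from $0$ and $K$ by \eqref{UV}; consequently the coefficients $D(u),S(u)$ in \eqref{P H} and their derivatives are bounded above and below, $V$ is bounded below by $V(0)>0$, and every genuinely nonlinear remainder on the right-hand side of \eqref{phipsi equ} (the quadratic terms $D(\phi_x+U)-D(U)-D'(U)\phi_x$, its $S$-analogue, and $\phi_x\psi$) carries a factor of size $\sqrt{\delta_1}$ relative to the dissipation, hence may ultimately be absorbed.

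Next I would derive the basic $L^2$ estimate, where the crucial ingredient is the cancellation between the chemotactic coupling $-\chi S(U)\psi_x$ in the $\phi$-equation and the consumption coupling $-V\phi_x$ in the $\psi$-equation. Multiplying the $\phi$-equation of \eqref{phipsi equ} by $\phi$ and the $\psi$-equation by the weighted quantity $w\psi$ with $w\triangleq\chi S(U)/V$, integrating over $(0,L)$ and integrating by parts, all boundary contributions vanish by \eqref{phipsi boundary condition}. The leading cross terms then combine as $(\chi S(U)-wV)\int\phi_x\psi=0$, which is precisely the cancellation structure advertised in the introduction; the residual coupling $\chi\int S'(U)U'\phi\psi$ is of lower order. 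Using the diffusion term $\int D(U)\phi_x^2$, the consumption damping $\int wU\psi^2$, the gradient damping $\int w\psi_x^2$, and Poincaré's inequality (legitimate since $\phi(0)=\phi(L)=0$), I obtain $\frac{\mathrm{d}}{\mathrm{d}t}\mathcal{E}_0+c_0\mathcal{D}_0\leq C\sqrt{\delta_1}\,\mathcal{D}$, where $\mathcal{E}_0\sim\|\phi\|_{L^2}^2+\|\psi\|_{L^2}^2$ and $\mathcal{D}_0\sim\|\phi_x\|_{L^2}^2+\|\psi\|_{H^1}^2$.

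I would then climb the derivative ladder. Testing the $\phi$-equation against $-\phi_{xx}$ and the $\psi$-equation against $-\psi_{xx}$ (the boundary terms vanishing since $\phi_t=0$ where $\phi=0$, $\psi_t=0$ where $\psi=0$, and $\psi_x(0)=0$) controls $\|\phi_x\|_{L^2}^2,\|\psi_x\|_{L^2}^2$ with dissipation $\|\phi_{xx}\|_{L^2}^2,\|\psi_{xx}\|_{L^2}^2$. For the top order on $\phi$ I differentiate the $\phi$-equation in $x$ and run an $H^1$ energy estimate on $\phi_x$, determining the boundary contributions from the equation restricted to $x=0,L$; this yields $\|\phi_{xx}\|_{L^2}^2$ with dissipation $\|\phi_{xxx}\|_{L^2}^2$. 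At each level the coupling terms are handled by the same weight $w$ and cancellation, or are absorbed by Young's inequality together with the $\sqrt{\delta_1}$ smallness, while the $\psi$-derivatives entering the $\phi$-estimates are controlled by the $\psi$-dissipation.

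Finally I would form $\mathcal{E}=\mathcal{E}_0+\sum(\text{higher-order energies})$ and the corresponding total dissipation $\mathcal{D}$, weighting the higher-order pieces by small constants so that the cross-level errors are absorbed. For $\delta_1$ small this gives $\frac{\mathrm{d}}{\mathrm{d}t}\mathcal{E}+c\mathcal{D}\leq0$. Since $\mathcal{E}\lesssim\mathcal{D}$ by Poincaré and the equivalence of $\mathcal{E}$ with $\|\phi\|_{H^2}^2+\|\psi\|_{H^1}^2$, this yields $\frac{\mathrm{d}}{\mathrm{d}t}\mathcal{E}+\alpha\mathcal{E}\leq0$, hence \eqref{a priori estimate1}; integrating $\frac{\mathrm{d}}{\mathrm{d}t}(\mathrm{e}^{\alpha t}\mathcal{E})+c\,\mathrm{e}^{\alpha t}\mathcal{D}\leq0$ then gives \eqref{a priori estimate3}, with $\alpha$ and $C$ independent of $T$. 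The hard part will be the zeroth- and first-order coupling: identifying the weight $w=\chi S(U)/V$ that annihilates the principal cross term, and then verifying that the residual lower-order couplings (notably $\chi\int S'(U)U'\phi\psi$ and its higher-order analogues) are genuinely dominated by the available damping $\int D(U)\phi_x^2+\int wU\psi^2+\int w\psi_x^2$ after Poincaré, so that $c_0>0$ survives with no restriction on the parameters $\chi,K,L,b$.
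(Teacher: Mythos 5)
There is a genuine gap at the step you yourself flag as ``the hard part'': with your choice of multipliers ($\phi$ on the first equation and $w\psi$ with $w=\chi S(U)/V$ on the second) the residual couplings are \emph{not} lower order and cannot be absorbed. Integrating $-\chi\int S(U)\psi_x\phi$ by parts leaves $\chi\int S'(U)U^\prime\phi\psi$; the transport term combines with the diffusion term to leave $-\chi\int S'(U)V^\prime\phi\phi_x$; and moving derivatives off $\psi_{xx}$ against the non-constant weight $w$ produces a sign-indefinite zeroth-order term involving $w_{xx}$ (hence $U^{\prime\prime}$, $V^{\prime\prime}$). None of these carries a factor of $\sqrt{\delta_1}$ --- they are linear in $(\phi,\psi)$ with coefficients of size $\chi$ or $\chi^2$ (recall $U^\prime=\chi q(U)UV^\prime/D(U)$) --- so after Young and Poincar\'e they compete with the dissipation $\int D(U)\phi_x^2+\int wU\psi^2+\int w\psi_x^2$ on equal footing, and closing the estimate would require a smallness condition on $\chi,K,L,b$, contradicting the parameter-free claim. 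The paper's cancellation is sharper than ``choose $w$ so that the principal cross terms match'': it tests the first equation with $\phi/S(U)$ and the second with $\chi\psi/V$. Then the chemotaxis coupling becomes $-\chi\int\psi_x\phi$ with \emph{constant} coefficient, so integration by parts yields exactly $+\chi\int\psi\phi_x$ with no residual, cancelling $-\chi\int\phi_x\psi$ from the $\psi$-equation; the entire first-order coefficient $-\bigl(D(U)/S(U)\bigr)_x+\bigl(D'(U)U^\prime-\chi S'(U)V^\prime\bigr)/S(U)$ vanishes \emph{identically} by the steady-state identity $D(U)U^\prime=\chi S(U)V^\prime$ (equation \eqref{eq4}); and the zeroth-order damping for $\psi$ is secured by the pointwise steady-state inequality $(V^\prime)^2\le UV^2$ of \eqref{inequality}, giving $\bigl(V^\prime/(2V^2)\bigr)_x+U/V\ge U/(2V)$. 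Without identifying these exact algebraic cancellations your energy inequality does not close.

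A secondary concern: for the higher-order estimates you propose to differentiate the $\phi$-equation in $x$ and read off boundary data for $\phi_{xx}$ from the equation at $x=0,L$; since $\phi_x=u-U$ satisfies no useful boundary condition, this is delicate. The paper instead tests with $\phi_t/D(U)$ and $\psi_t$, then differentiates the system in $t$ (where $\phi_t=\psi_t=0$ on the relevant parts of the boundary follows from \eqref{phipsi boundary condition}), and finally recovers $\|\phi_{xx}\|_{L^2}$, $\|\phi_{xxx}\|_{L^2}$, $\|\psi_{xx}\|_{L^2}$ algebraically from the equations. You should either adopt that route or justify the boundary terms you discard.
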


Before establishing the \textit{a priori} estimates in Proposition \ref{proposition a priori estimates},
we note by \eqref{N(t)} and Sobolev inequality, it holds
\begin{equation}
	\sup_{\tau\in [0,t]}\left\{\left\|\phi(\cdot,\tau)\right\|_{L^{\infty}},
	\left\|\phi_x(\cdot,\tau)\right\|_{L^{\infty}}, \left\|\psi(\cdot,\tau)\right\|_{L^{\infty}}\right\} \leq C\sqrt{\delta_1} \text{	 for any }  t\in[0,T],
\end{equation}
where $C>0$ is a constant independent of $\delta_1$ and $T$.

Now let us turn to the \emph{a priori} estimates for $(\phi,\psi)$. We begin with the following $L^2$ estimate.

\begin{lemma}\label{lemma L2} Assume that the conditions of Proposition \ref{proposition a priori estimates} hold.
	For any solution $(\phi,\psi)\in X(0,T)$ of the problem \eqref{phipsi equ}-\eqref{phipsi boundary condition} satisfying \eqref{N(t)}, it holds that
	\begin{equation}\label{L2 estimate}
		\mathrm{e}^{\alpha t}(\left\|\phi\left(\cdot,t\right)\right\|_{L^2}^2+\left\|\psi\left(\cdot,t\right)\right\|_{L^2}^2)+\int_0^t\mathrm{e}^{\alpha \tau}\left(\left\|\phi_x\right\|_{L^2}^2+\left\|\psi_x\right\|_{L^2}^2\right) \mathrm{d} \tau\leq C\left(\left\|\phi_0\right\|_{L^2}^2+\left\|\psi_0\right\|_{L^2}^2\right),
	\end{equation}
	for any $t\in[0,T]$, provided that $\sqrt{\delta_1}$ is suitably small and $\delta_{1}\leq\delta_0$, where $\alpha$ and $C$ are positive constants independent of $t$.
\end{lemma}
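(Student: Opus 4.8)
The plan is to run a weighted $L^2$ energy estimate in which the two multipliers are engineered so that the transport term of the $\phi$-equation collapses into a pure dissipation while the two chemotactic cross terms cancel exactly. Concretely, for fixed $\chi$ (so that, by \eqref{UV}, $U,V,S(U),D(U)$ all stay bounded away from their degenerate values on $[0,L]$), I would multiply the first equation of \eqref{phipsi equ} by $\phi/S(U)$ and the second by $\chi\psi/V$, integrate over $(0,L)$, and add. The algebraic fact that makes $1/S(U)$ the right weight is the integrating-factor identity obtained by differentiating the stationary relation $D(U)U'=\chi S(U)V'$ from \eqref{transformed steady state problem plus}, namely
\[
\Big(\tfrac{D(U)}{S(U)}\Big)'=\frac{D'(U)U'-\chi S'(U)V'}{S(U)}.
\]
Because of this, after integrating by parts and using $\phi(0)=\phi(L)=0$ from \eqref{phipsi boundary condition}, the diffusion together with the drift $(D'(U)U'-\chi S'(U)V')\phi_x$ produces a clean term, and since $\tfrac{1}{S(U)}S(U)\equiv1$ no zeroth-order coupling survives; the linear part of the $\phi$-energy becomes
\[
\tfrac12\tfrac{d}{dt}\int\tfrac{\phi^2}{S(U)}+\int\tfrac{D(U)}{S(U)}\phi_x^2=\chi\int\phi_x\psi+(\text{nonlinear}),
\]
the sole leftover linear term $\chi\int\phi_x\psi$ coming from integrating $-\chi\int\phi\psi_x$ by parts.

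For the $\psi$-equation, multiplying by $\chi\psi/V$ and integrating by parts (all boundary contributions vanish since $\psi(L)=0$, $\psi_x(0)=0$ and $V'(0)=0$) gives
\[
\tfrac{\chi}{2}\tfrac{d}{dt}\int\tfrac{\psi^2}{V}+\chi\int\tfrac{\psi_x^2}{V}+\chi\int\Big(\tfrac{U}{V}-\tfrac12\big(\tfrac1V\big)''\Big)\psi^2=-\chi\int\phi_x\psi+(\text{nonlinear}).
\]
Adding the two identities, the terms $\pm\chi\int\phi_x\psi$ cancel — this is the cancellation structure. It then remains to check the sign of the residual zeroth-order coefficient: from $V''=UV$ one has $\big(\tfrac1V\big)''=\tfrac{2(V')^2}{V^3}-\tfrac{U}{V}$, hence
\[
\tfrac{U}{V}-\tfrac12\big(\tfrac1V\big)''=\tfrac{3U}{2V}-\tfrac{(V')^2}{V^3}\ge\tfrac{U}{2V}>0,
\]
where the inequality is precisely \eqref{inequality}, $(V')^2\le UV^2$. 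Thus the combined identity reads $\tfrac12\tfrac{d}{dt}E_0+\mathcal D_0\le(\text{nonlinear})$ with $E_0=\int\frac{\phi^2}{S(U)}+\chi\int\frac{\psi^2}{V}\sim\|\phi\|_{L^2}^2+\|\psi\|_{L^2}^2$ and a dissipation $\mathcal D_0$ controlling $\|\phi_x\|_{L^2}^2+\|\psi_x\|_{L^2}^2+\|\psi\|_{L^2}^2$.

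The nonlinear terms are then treated under the a priori smallness \eqref{N(t)}, which yields $\|\phi\|_{L^\infty}+\|\phi_x\|_{L^\infty}+\|\psi\|_{L^\infty}\le C\sqrt{\delta_1}$. The quadratic remainders (the $O(\phi_x^2)$ Taylor pieces and the $\phi_x\psi_x$, $\phi_x\psi^2$ terms) are at once bounded by $C\sqrt{\delta_1}\,\mathcal D_0$. The one genuinely delicate contribution is the top-order term $\int\frac{\phi}{S(U)}\big(D(\phi_x+U)-D(U)\big)\phi_{xx}$, for which no $\phi_{xx}$ dissipation is available at the $L^2$ level; I would integrate it by parts via $\big(D(\phi_x+U)-D(U)\big)\phi_{xx}=\tfrac{d}{dx}\Phi(\phi_x)$ with $\Phi(s)=\int_0^s\big(D(r+U)-D(U)\big)\,dr=O(s^2)$, so that using $\phi(0)=\phi(L)=0$ it becomes $-\int\big(\tfrac{\phi}{S(U)}\big)'\Phi(\phi_x)=O\big((\|\phi\|_{L^\infty}+\|\phi_x\|_{L^\infty})\|\phi_x\|_{L^2}^2\big)\le C\sqrt{\delta_1}\|\phi_x\|_{L^2}^2$. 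Absorbing all of these into $\mathcal D_0$ for $\delta_1$ small leaves $\tfrac{d}{dt}E_0+c\,\mathcal D_0\le0$. Finally, since $\phi(0)=\phi(L)=0$, the Poincaré inequality gives $\|\phi\|_{L^2}^2\le C\|\phi_x\|_{L^2}^2$, whence $E_0\le C_\ast\mathcal D_0$; choosing $\alpha\le c/(2C_\ast)$ and computing $\tfrac{d}{dt}(\mathrm{e}^{\alpha t}E_0)+\tfrac{c}{2}\mathrm{e}^{\alpha t}\mathcal D_0\le0$, an integration in $t$ yields \eqref{L2 estimate}.

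The main obstacle is the first step: discovering the pair of weights $1/S(U)$ and $\chi/V$ that simultaneously (i) convert the $\phi$-drift into a clean dissipation, through the stationary integrating-factor identity above, and (ii) force the two chemotactic cross terms to cancel, after which the positivity of the residual $\psi^2$-coefficient rests entirely on the pointwise bound \eqref{inequality}. The secondary technical point is the integration by parts that tames the top-order nonlinear term $\phi\,(D(\phi_x+U)-D(U))\phi_{xx}$ in the absence of any second-order dissipation; everything else is routine absorption.
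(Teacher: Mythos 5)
Your proposal follows essentially the same route as the paper's proof: the same weighted multipliers $\phi/S(U)$ and $\chi\psi/V$, the same integrating-factor identity derived from the stationary equation, the same exact cancellation of the chemotactic cross terms, the same use of $(V^\prime)^2\le UV^2$ to get the good sign of the residual $\psi^2$ coefficient, and the same absorption--Poincar\'e--exponential-weight conclusion. The only divergence is the top-order term $\int\frac{\phi}{S(U)}\left(D(\phi_x+U)-D(U)\right)\phi_{xx}$, which the paper bounds directly by $C\|\phi\|_{L^\infty}\|\phi_x\|_{L^2}\|\phi_{xx}\|_{L^2}\le C\sqrt{\delta_1}\|\phi_x\|_{L^2}^2$ using the a priori bound $\|\phi_{xx}\|_{L^2}\le C\sqrt{\delta_1}$ from \eqref{N(t)}, whereas you integrate by parts --- which also works, though note that your $\Phi$ depends on $x$ through $U$, so $\frac{\mathrm{d}}{\mathrm{d}x}\Phi(\phi_x)$ picks up an extra (harmless, $O(\phi_x^2)$) term beyond $\left(D(\phi_x+U)-D(U)\right)\phi_{xx}$.
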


\begin{proof} Because $0<U(x)<K$ for all $x\in[0,L]$, we know that $S(U)$ has positive lower bound. Then multiplying the first equation of \eqref{phipsi equ} by $\frac{\phi}{S(U)}$ and the second one by $\chi \frac{\psi}{V}$, adding them and integrating by parts, noting that	
	\begin{equation}\nonumber
		\int \frac{D(U)}{S(U)}\phi\phi_{xx}=-\int\frac{D(U)}{S(U)}\phi_x^2-\int\left(\frac{D(U)}{S(U)} \right)_x\phi\phi_x		
	\end{equation}
	and
	\begin{equation}\nonumber
		\chi\int\frac{\psi\psi_{xx}}{V}=-\chi\int\frac{\psi_x^2}{V}+\chi\int\frac{V^\prime}{V^2}\left(\frac{\psi^2}{2} \right)_x=-\chi\int\frac{\psi_x^2}{V}-		 \chi\int\left(\frac{V^\prime}{2V^2}\right)_x\psi^2,
	\end{equation}
	we obtain
	\begin{equation}\label{eq2}
		\begin{aligned}
			&\frac{1}{2}\frac{\mathrm{d}}{\mathrm{d}t}\int\left(\frac{\phi^2}{S(U)}+\chi\frac{\psi^2}{V} \right)+\int\frac{D(U)}{S(U)}\phi_x^2+\chi \int\frac{\psi_x^2}{V}+\chi\int\left[\left(\frac{V^\prime}{2V^2}\right)_x+\frac{U}{V}\right]\psi^2\\&=\int\left[-\left(\frac{D(U)}{S(U)} \right)_x+\frac{D'(U)U^\prime-\chi S'(U)V^\prime }{S(U)}\right]\phi\phi_{ x}+\int \frac{\phi\phi_{x x}}{S(U)}\left( D\left( \phi_x+U\right)-D(U)\right)\\&\quad+\int \frac{U^\prime}{S(U)}\left(D\left( \phi_x+U\right)-D(U)-D'(U)\phi_{ x}\right)\phi-\chi\int \frac{\phi\psi_{x}}{S(U)}\left(S\left( \phi_x+U\right)-S(U) \right)\\&\quad-\chi\int \frac{V^\prime}{S(U)}\left(S\left( \phi_x+U\right)-S(U)-S'(U)\phi_{ x}\right)\phi-\chi\int\frac{\phi_x\psi^2}{V}.
		\end{aligned}
	\end{equation}
	By virtue of \eqref{inequality} and the second equation of \eqref{transformed steady state problem plus}, it holds that
	\begin{equation}\label{eq3}
		\left(\frac{V^\prime}{2V^2}\right)_x+\frac{U}{V} =\frac{V^{\prime\prime  }}{2V^2}-\frac{ \left(V^\prime \right)^2}{V^3}+\frac{U}{V} =\frac{3}{2} \frac{U}{V}- \frac{\left(V^\prime \right)^2}{V^3}\geq \frac{U}{2V}.
	\end{equation}
	In view of the first equation of \eqref{transformed steady state problem plus}, we have
	\begin{equation}\label{eq4}
		-\left(\frac{D(U)}{S(U)} \right)_x+\frac{D'(U)U^\prime-\chi S'(U)V^\prime }{S(U)}=0.
	\end{equation}
	Owing to the fact that  $\left\|\phi_x(\cdot,t)\right\|_{L^{\infty}} \leq C\sqrt{\delta_1}$, by Taylor's expansion, when $\sqrt{\delta_1}$ is small such that $C\sqrt{\delta_1}<\frac{1}{2}(K-\|U\|_{C[0,L]})$, we have
	\begin{equation}\label{new-4.19}
		|D\left( \phi_x+U\right)-D(U)|\leq C|\phi_x|, \ |S\left( \phi_x+U\right)-S(U)|\leq C|\phi_x|,
	\end{equation}
	and
	\begin{equation}\label{new-4.20}
		|D\left( \phi_x+U\right)-D(U)-D'(U)\phi_{ x}|\leq C|\phi_x|^2, \ |S\left( \phi_x+U\right)-S(U)-S'(U)\phi_{ x}|\leq C|\phi_x|^2.
	\end{equation}
	Recalling $S(U)$ has a positive lower bound, and noting $\left\|\phi_{xx}\right\|_{L^{2}} \leq C\sqrt{\delta_1}$, it follows from Sobolev inequality $\|f\|_{L^{\infty}} \leq C\|f_x\|_{L^2}$ for any $f\in H_0^1$ and H\"{o}lder's inequality that
	\begin{equation}\label{4.19}
		\left|\int \frac{\phi\phi_{x x}}{S(U)}\left( D\left( \phi_x+U\right)-D(U)\right)\right|\leq C\left\|\phi\right\|_{L^{\infty}}\left\|\phi_x\right\|_{L^{2}}\left\|\phi_{xx}\right\|_{L^{2}}\leq C \sqrt{\delta_1} \left\|\phi_{x}\right\|_{L^{2}}^2.
	\end{equation}
	Since $\left\|\phi\right\|_{L^{\infty}} \leq C\|\phi_x(\cdot,t)\|_{L^2} \leq C\sqrt{\delta_1}$, by  H\"{o}lder's inequality, we deduce that
	\begin{equation}\label{4.20}
		\left|\int \frac{U^\prime}{S(U)}\left(D\left( \phi_x+U\right)-D(U)-D'(U)\phi_{ x}\right)\phi\right|\leq C \int |\phi|\phi_{ x}^2\leq C\sqrt{\delta_1}\int\phi_x^2,
	\end{equation}
	\begin{equation}\label{4.21}
		\chi\left|\int \frac{\phi\psi_{x}}{S(U)}\left(S\left( \phi_x+U\right)-S(U) \right)\right|\leq C \int |\phi\phi_{ x}\psi_x|\leq C\sqrt{\delta_1}\left(\int\phi_x^2+\int\psi_{ x}^2 \right),		
	\end{equation}
	and that
	\begin{equation}\label{eq7}
		\chi\left|\int \frac{V^\prime}{S(U)}\left(S\left( \phi_x+U\right)-S(U)-S'(U)\phi_{ x}\right)\phi\right|\leq C \int |\phi|\phi_{ x}^2\leq C\sqrt{\delta_1}\int\phi_x^2.
	\end{equation}	
	Similarly, thanks to the lower boundedness of $V$, it holds
	\begin{equation}\label{eq6}
		\chi\left|\int\frac{\phi_x\psi^2}{V}\right|\leq C\sqrt{\delta_1} \int \psi^2.	 	
	\end{equation}
	Substituting \eqref{eq3}-\eqref{eq4}, \eqref{4.19}-\eqref{eq6} into \eqref{eq2}, after choosing $\sqrt{\delta_1}$ suitably small and $\delta_1\leq \delta_0$, we obtain
	\begin{equation}\label{eq8}
		\frac{\mathrm{d}}{\mathrm{d}t}\int\left(\frac{\phi^2}{S(U)}+\chi\frac{\psi^2}{V} \right)+\int\frac{D(U)}{S(U)}\phi_x^2+\chi \int\frac{\psi_x^2}{V}+\chi\int \frac{U}{V}\psi^2\leq 0.
	\end{equation}
	Multiplying \eqref{eq8} by $\mathrm{e}^{\alpha t}$ and integrating the inequality over $(0,t)$, we have
	\begin{equation}\label{4.25}
		\begin{split}
			&\mathrm{e}^{\alpha t}\int\left(\frac{\phi^2}{S(U)}+\chi\frac{\psi^2}{V} \right)+\int_0^t\mathrm{e}^{\alpha \tau}\int\left(\frac{D(U)}{S(U)}\phi_x^2+\chi\frac{\psi_x^2}{V}+\chi\frac{U}{V}\psi^2-\alpha\frac{\phi^2}{S(U)}-\alpha\chi\frac{\psi^2}{V}\right)\\
			&\leq\int\left(\frac{\phi_0^2}{S(U)}+\chi\frac{\psi_0^2}{V} \right).
	\end{split}\end{equation}
	Therefore, noting $\frac{D(U)}{S(U)}$ has a positive lower bound, when $\alpha\ll1$, the desired estimate \eqref{L2 estimate} follows from Poincar\'{e}'s inequality.	
\end{proof}

We next derive the estimate for $(\phi_{x},\psi_{ x})$.
\begin{lemma}\label{lemma H1}
	Under the conditions of Lemma \ref{lemma L2}, for any $t\in[0,T]$, it holds that
	\begin{equation}\label{H^1 estimate}
		\begin{aligned}
			&\mathrm{e}^{\alpha t}\left( \left\|\phi_x(\cdot,t)\right\|_{L^2}^2+\left\|\psi_x(\cdot,t)\right\|_{L^2}^2\right)+\int_0^t\mathrm{e}^{\alpha\tau}\left(\left\|\phi_\tau\right\|_{L^2}^2+\left\|\psi_\tau\right\|_{L^2}^2\right) \mathrm{d} \tau\\&\quad\leq C\left(\left\|\phi_{0}\right\|_{H^1}^2+\left\|\psi_{0}\right\|_{H^1}^2 \right) +C\sqrt{\delta_1}\int_0^t \mathrm{e}^{\alpha\tau} \left\|\phi_{x\tau}\right\|_{L^2}^2\mathrm{d} \tau,
		\end{aligned}	
	\end{equation}
	provided $\sqrt{\delta_1}$ is suitably small, where $\alpha$ is the constant in \eqref{L2 estimate}.
\end{lemma}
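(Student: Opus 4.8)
The plan is to run a time-derivative energy estimate: multiply the first equation of \eqref{phipsi equ} by $\phi_t$ and the second by $\chi\psi_t$, add them, and integrate over $(0,L)$. Integrating the principal second-order terms $\int D(U)\phi_{xx}\phi_t$ and $\chi\int\psi_{xx}\psi_t$ by parts in $x$ yields the positive dissipation $\|\phi_t\|_{L^2}^2+\chi\|\psi_t\|_{L^2}^2$ together with $-\frac12\frac{\mathrm{d}}{\mathrm{d}t}\int D(U)\phi_x^2$ and $-\frac{\chi}{2}\frac{\mathrm{d}}{\mathrm{d}t}\int\psi_x^2$, while the reaction term gives $-\chi\int U\psi\psi_t=-\frac{\chi}{2}\frac{\mathrm{d}}{\mathrm{d}t}\int U\psi^2$ (recall $U$ is $t$-independent). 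All boundary contributions vanish because \eqref{phipsi boundary condition} forces $\phi_t(0,t)=\phi_t(L,t)=\psi_t(L,t)=0$ and $\psi_x(0,t)=0$. This isolates the natural energy $E(t)=\frac12\int D(U)\phi_x^2+\frac{\chi}{2}\int\psi_x^2+\frac{\chi}{2}\int U\psi^2$, which is comparable to $\|\phi_x\|_{L^2}^2+\|\psi_x\|_{L^2}^2$ since $D(U)$ and $U$ are bounded above and below.

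Next I would dispose of the remaining terms. The linear cross terms $-\chi\int S(U)\psi_x\phi_t$ and $-\chi\int V\phi_x\psi_t$, together with the lower-order term $\int(D'(U)U'-\chi S'(U)V')\phi_x\phi_t$, are handled by Young's inequality, sending small multiples of $\|\phi_t\|_{L^2}^2$ and $\|\psi_t\|_{L^2}^2$ to the dissipation side and leaving multiples of $\|\phi_x\|_{L^2}^2$ and $\|\psi_x\|_{L^2}^2$. The nonlinear terms arising from the Taylor remainders are controlled by \eqref{new-4.19}--\eqref{new-4.20} and the \emph{a priori} smallness: for example the quadratic remainders satisfy $|S(\phi_x+U)-S(U)-S'(U)\phi_x|\le C\phi_x^2$, so those terms are bounded by $C\sqrt{\delta_1}(\|\phi_x\|_{L^2}^2+\|\phi_t\|_{L^2}^2)$, and $-\chi\int\phi_x\psi\psi_t$ is bounded using $\|\psi\|_{L^\infty}\le C\sqrt{\delta_1}$.

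The only genuinely delicate term is $\int(D(\phi_x+U)-D(U))\phi_{xx}\phi_t$, since no bound on $\|\phi_{xx}\|_{L^2}$ is available at the $H^1$ level. Here I would integrate by parts in $x$ to shift the derivative off $\phi_{xx}$: writing $a:=D(\phi_x+U)-D(U)$ with $|a|\le C|\phi_x|$, the boundary term vanishes (as $\phi_t=0$ at $x=0,L$) and one is left with $-\int a_x\phi_x\phi_t-\int a\,\phi_x\phi_{xt}$. The second piece is estimated by $C\|a\|_{L^\infty}\|\phi_x\|_{L^2}\|\phi_{xt}\|_{L^2}\le C\sqrt{\delta_1}(\|\phi_x\|_{L^2}^2+\|\phi_{xt}\|_{L^2}^2)$, which is precisely the origin of the remainder $C\sqrt{\delta_1}\int_0^t\mathrm{e}^{\alpha\tau}\|\phi_{x\tau}\|_{L^2}^2$ on the right-hand side of \eqref{H^1 estimate}; this higher-order term is left in place, to be absorbed at the next ($H^2$) stage of the \emph{a priori} estimates.

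Finally I would collect everything into $\frac{\mathrm{d}}{\mathrm{d}t}E(t)+\frac12(\|\phi_t\|_{L^2}^2+\chi\|\psi_t\|_{L^2}^2)\le C(\|\phi_x\|_{L^2}^2+\|\psi_x\|_{L^2}^2+\|\psi\|_{L^2}^2)+C\sqrt{\delta_1}\|\phi_{xt}\|_{L^2}^2$, having chosen $\sqrt{\delta_1}$ and the Young parameters small enough to absorb the $\|\phi_t\|_{L^2}^2,\|\psi_t\|_{L^2}^2$ contributions into the dissipation. Multiplying by $\mathrm{e}^{\alpha\tau}$ and integrating over $(0,t)$, the term $\frac{\mathrm{d}}{\mathrm{d}t}E$ produces $\mathrm{e}^{\alpha t}E(t)-E(0)-\alpha\int_0^t\mathrm{e}^{\alpha\tau}E$, and for $\alpha$ small the last integral is absorbed into the lower-order integrals $\int_0^t\mathrm{e}^{\alpha\tau}(\|\phi_x\|_{L^2}^2+\|\psi_x\|_{L^2}^2)$. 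The decisive closing step is that all accumulated lower-order integrals (including $\int_0^t\mathrm{e}^{\alpha\tau}\|\psi\|_{L^2}^2$, via Poincaré's inequality since $\psi(L,t)=0$) are already controlled by the data through the $L^2$ estimate \eqref{L2 estimate}, while $E(0)\le C(\|\phi_0\|_{H^1}^2+\|\psi_0\|_{H^1}^2)$; these combine to give \eqref{H^1 estimate}. The main obstacle is exactly the $\phi_{xx}$-bearing nonlinear term, whose treatment forces the higher-order remainder into the estimate and dictates the bootstrap to the $H^2$ bound.
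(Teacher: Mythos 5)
Your argument is essentially the paper's: an energy identity obtained by testing the two equations of \eqref{phipsi equ} with multipliers proportional to $\phi_t$ and $\psi_t$ (the paper uses $\phi_t/D(U)$ and $\psi_t$, so its energy is exactly $\tfrac12\int(\phi_x^2+\psi_x^2+U\psi^2)$; your weighting by $D(U)$ and $\chi$ is an equivalent normalization), Young's inequality on the linear cross terms, the Taylor bounds \eqref{new-4.19}--\eqref{new-4.20} with the \emph{a priori} smallness on the nonlinear ones, and an $\mathrm{e}^{\alpha t}$-weighted integration closed by Lemma \ref{lemma L2}. The only place you deviate is the term $\int (D(\phi_x+U)-D(U))\phi_{xx}\phi_t/D(U)$, and there your premise is off: the \emph{a priori} assumption \eqref{N(t)} does give $\|\phi_{xx}(\cdot,t)\|_{L^2}\le C\sqrt{\delta_1}$ on $[0,T]$, and the paper simply bounds the term directly by $C\|\phi_t\|_{L^\infty}\|\phi_x\|_{L^2}\|\phi_{xx}\|_{L^2}\le C\sqrt{\delta_1}\,\|\phi_{xt}\|_{L^2}\|\phi_x\|_{L^2}$, which is exactly where the remainder $C\sqrt{\delta_1}\int_0^t\mathrm{e}^{\alpha\tau}\|\phi_{x\tau}\|_{L^2}^2$ in \eqref{H^1 estimate} originates. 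Moreover, your integration by parts does not actually remove the second derivative: with $a=D(\phi_x+U)-D(U)$ one has $a_x=D'(\phi_x+U)\phi_{xx}+\left(D'(\phi_x+U)-D'(U)\right)U^\prime$, so the piece $-\int a_x\phi_x\phi_t$ that you leave unestimated still carries $\phi_{xx}$ and must be handled by the same device, e.g. $\int|\phi_{xx}||\phi_x||\phi_t|\le\|\phi_t\|_{L^\infty}\|\phi_x\|_{L^2}\|\phi_{xx}\|_{L^2}\le C\sqrt{\delta_1}\|\phi_{xt}\|_{L^2}\|\phi_x\|_{L^2}$, reproducing the paper's bound for the original term. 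So the detour is harmless but incomplete as written, and the direct estimate is simpler; everything else in your proposal matches the paper's proof.
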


\begin{proof}
	Because $D(U)$ has a positive lower bound, we multiply the first equation of \eqref{phipsi equ} by $\frac{ \phi_{t}}{D(U)}$ and the second one by $\psi_{t}$, and integrate the equations on $(0,L)$ to obtain
	\begin{equation}\label{eq9}
		\begin{split}
			&\frac{1}{2}\frac{\mathrm{d}}{\mathrm{d}t}\int \left(\phi_{x}^2+\psi_{x}^2+U \psi^2 \right)+\int\left(\frac{\phi_{t}^2}{D(U)}+\psi_{t}^2 \right)\\&=\int \frac{\phi_{ x}\phi_t}{D(U)}\left( D'(U)U^\prime-\chi S'(U)V^\prime\right)-\chi \int \frac{S(U)}{D(U)}\psi_x\phi_t+\int \frac{\phi_{x x}\phi_t}{D(U)}\left( D\left( \phi_x+U\right)-D(U)\right)\\&\quad-\chi\int \frac{\psi_{x}\phi_t}{D(U)}\left(S\left( \phi_x+U\right)-S(U) \right)+\int \frac{U^\prime}{D(U)}\left(D\left( \phi_x+U\right)-D(U)-D'(U)\phi_{ x}\right)\phi_t\\&\quad-\chi\int \frac{V^\prime}{D(U)}\left(S\left( \phi_x+U\right)-S(U)-S'(U)\phi_{ x}\right)\phi_t-\int V\phi_{x}\psi_t-\int\phi_x\psi\psi_t.
		\end{split}	
	\end{equation}	
	By Cauchy-Schwarz inequality, it holds that
	\begin{equation}\label{eq10}
		\begin{aligned}
			\left|\int \frac{\phi_{ x}\phi_t}{D(U)}\left( D'(U)U^\prime-\chi S'(U)V^\prime\right)\right|+\chi \left|\int \frac{S(U)}{D(U)}\psi_x\phi_t\right|\leq \frac{1}{4}\int\frac{\phi_{t}^2}{D(U)}+C\int\left(\phi_{x}^2+\psi_{x}^2 \right), 	
		\end{aligned}
	\end{equation}
	and that
	\begin{equation}
		\left|\int V\phi_{x}\psi_t\right|\leq \frac{1}{4}\int\psi_{t}^2+C\int\phi_{x}^2.
	\end{equation}	
	In view of the fact that $\left\|\phi_{xx}(\cdot,t)\right\|_{L^{2}} \leq \sqrt{\delta_1}$, by H\"{o}lder's inequality, the third term on the right hand side of \eqref{eq9} can be estimated as	
	\begin{equation}
		\begin{aligned}
			\left|\int \frac{\phi_{x x}\phi_t}{D(U)}\left( D\left( \phi_x+U\right)-D(U)\right)\right|&\leq C\|\phi_t\|_{L^{\infty}}\|\phi_{x}\|_{L^{2}}\|\phi_{xx}\|_{L^{2}}\\&\leq C\sqrt{\delta_1}	\|\phi_{xt}\|_{L^{2}}\|\phi_{x}\|_{L^{2}}\\&\leq 	C\sqrt{\delta_1}\int \phi_{xt}^2 +C\sqrt{\delta_1}\int \phi_{x}^2,	
		\end{aligned}
	\end{equation}where we have used \eqref{new-4.19} in the first inequality and Sobolev inequality in the second inequality. Similarly, using  $\left\|\phi_x(\cdot,t)\right\|_{L^{\infty}} \leq C\sqrt{\delta_1}$, the fourth term satisfies
	\begin{equation}
		\chi\left|\int \frac{\psi_{x}\phi_t}{D(U)}\left(S\left( \phi_x+U\right)-S(U) \right)\right|\leq C\sqrt{\delta_1}\int\frac{\phi_{t}^2}{D(U)}+C\sqrt{\delta_1}\int\psi_{x}^2.
	\end{equation}
	Utilizing \eqref{new-4.20}, we deduce that
	\begin{equation}
		\left|\int \frac{U^\prime}{D(U)}\left(D\left( \phi_x+U\right)-D(U)-D'(U)\phi_{ x}\right)\phi_t\right|\leq C\sqrt{\delta_1}\int\frac{\phi_{t}^2}{D(U)}+C\sqrt{\delta_1}\int\phi_{x}^2,
	\end{equation}	
	and
	\begin{equation}
		\chi\left|\int \frac{V^\prime}{D(U)}\left(S\left( \phi_x+U\right)-S(U)-S'(U)\phi_{ x}\right)\phi_t\right|\leq C\sqrt{\delta_1}\int\frac{\phi_{t}^2}{D(U)}+C\sqrt{\delta_1}\int\phi_{x}^2.
	\end{equation}
	Thanks to $\left\|\psi(\cdot,t)\right\|_{L^{\infty}} \leq C\sqrt{\delta_1}$ and H\"{o}lder's inequality, the last term on the right hand side of \eqref{eq9} can be estimated as
	\begin{equation}\label{eq11}
		\left|\int\phi_x\psi\psi_t\right|\leq C \sqrt{\delta_1}\int|\phi_{x}\psi_t|\leq \sqrt{\delta_1}\int \psi_t^2+C\sqrt{\delta_1}\int\phi_x^2.
	\end{equation}
	Now substituting \eqref{eq10}-\eqref{eq11} into \eqref{eq9} and choosing $\sqrt{\delta_1}$ small enough, we get
	\begin{equation}\label{eq12}
		\frac{\mathrm{d}}{\mathrm{d}t}\int \left(\phi_{x}^2+\psi_{x}^2+U \psi^2 \right)+\int\left(\phi_{t}^2+\psi_{t}^2 \right)\leq C\int\left(\phi_{x}^2+\psi_x^2\right)+C\sqrt{\delta_1}\int \phi_{xt}^2.
	\end{equation}
	Multiplying \eqref{eq12} by $\mathrm{e}^{\alpha t}$, integrating the inequality in $t$ and using \eqref{L2 estimate}, one obtains \eqref{H^1 estimate}.
\end{proof}

In what follows, we derive the higher order estimates for $(\phi,\psi)$.

\begin{lemma}\label{lemma H2}
	Under the same conditions of Lemma \ref{lemma L2}, it holds for any $t\in[0,T]$ that
	\begin{equation}\label{H2 eistimate}
		\begin{split}\mathrm{e}^{\alpha t}(\|\phi(\cdot, t)\|_{H^2}^2+\|\psi(\cdot, t)\|_{H^1}^2 )+\int_0^t\mathrm{e}^{\alpha \tau}\left(\|\phi\|_{H^3}^2+\|\psi\|_{H^2}^2\right) \mathrm{d} \tau\leq C\left(\|\phi_0\|_{H^2}^2+\|\psi_0\|_{H^1}^2 \right),
	\end{split}\end{equation}
	provided $\sqrt{\delta_1}$ is suitably small.
\end{lemma}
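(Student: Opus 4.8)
The aim is to promote the $H^1$ bound of Lemma~\ref{lemma H1} to the full $H^2$ control in \eqref{H2 eistimate}; the decisive new ingredient is an estimate for $\phi_{xx}$ together with the dissipation of $\phi_{xt}$, the latter being exactly what is needed to absorb the leftover term $C\sqrt{\delta_1}\int_0^t \mathrm{e}^{\alpha\tau}\|\phi_{x\tau}\|_{L^2}^2\,\mathrm{d}\tau$ on the right-hand side of \eqref{H^1 estimate}. The plan is to treat the two unknowns separately, handling the top $\psi$-derivative algebraically and the top $\phi$-derivative by one further differentiated energy estimate.

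For $\psi$, the second equation of \eqref{phipsi equ} reads $\psi_{xx}=\psi_t+U\psi+V\phi_x+\phi_x\psi$, so that pointwise in $t$ one has $\|\psi_{xx}\|_{L^2}\le C(\|\psi_t\|_{L^2}+\|\psi\|_{L^2}+\|\phi_x\|_{L^2}+\|\phi_x\|_{L^\infty}\|\psi\|_{L^2})$. Since $\|\phi_x\|_{L^\infty}\le C\sqrt{\delta_1}$ and the remaining norms are already controlled in the $\mathrm{e}^{\alpha\tau}$-weighted $L^2(0,t;L^2)$ sense by Lemmas~\ref{lemma L2} and \ref{lemma H1}, this yields the $\int_0^t \mathrm{e}^{\alpha\tau}\|\psi\|_{H^2}^2$ part of \eqref{H2 eistimate} at once, while the pointwise $\mathrm{e}^{\alpha t}\|\psi_x\|_{L^2}^2$ is already supplied by \eqref{H^1 estimate}.

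For $\phi$, I would differentiate the first equation of \eqref{phipsi equ} in $x$ and test the result with $\phi_{xt}/D(U)$, mirroring the multiplier $\phi_t/D(U)$ of Lemma~\ref{lemma H1} raised by one derivative. Pairing the principal part $D(U)\phi_{xxx}$ with $\phi_{xt}$ and integrating by parts produces $\tfrac12\frac{\mathrm{d}}{\mathrm{d}t}\|\phi_{xx}\|_{L^2}^2$, while the multiplier supplies the dissipation $\int \phi_{xt}^2/D(U)$; using the lower bound $D(U)\ge c>0$ this gives, schematically,
\[
\frac{\mathrm{d}}{\mathrm{d}t}\|\phi_{xx}\|_{L^2}^2+c\|\phi_{xt}\|_{L^2}^2\le C\big(\|\phi_x\|_{L^2}^2+\|\psi_x\|_{L^2}^2+\|\phi_t\|_{L^2}^2+\|\psi_{xx}\|_{L^2}^2\big)+C\sqrt{\delta_1}\,\|\phi_{xt}\|_{L^2}^2+\mathrm{(b.t.)}.
\]
The low-order terms on the right are precisely those controlled by Lemmas~\ref{lemma L2}--\ref{lemma H1} and by the previous paragraph, whereas the top-order nonlinear contributions, arising from differentiating the nonlinearities of \eqref{phipsi equ} and involving $\phi_{xxx}$ and $\psi_{xx}$, each carry a factor bounded by $|\phi_x|$ or $|\psi|\le C\sqrt{\delta_1}$; after replacing $\phi_{xxx}$ by $\phi_{xt}/D(U)$ modulo lower-order terms, they are absorbed into $c\|\phi_{xt}\|_{L^2}^2$ once $\delta_1$ is small.

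Finally I would multiply the displayed inequality by $\mathrm{e}^{\alpha t}$, integrate in $t$, and add it to \eqref{H^1 estimate}; choosing $\sqrt{\delta_1}$ small absorbs both the leftover $C\sqrt{\delta_1}\int_0^t \mathrm{e}^{\alpha\tau}\|\phi_{x\tau}\|_{L^2}^2$ from Lemma~\ref{lemma H1} and the $C\sqrt{\delta_1}\|\phi_{xt}\|_{L^2}^2$ just produced into the genuine dissipation $c\int_0^t \mathrm{e}^{\alpha\tau}\|\phi_{x\tau}\|_{L^2}^2$. The pointwise $\mathrm{e}^{\alpha t}\|\phi_{xx}\|_{L^2}^2$ comes out of the time integration, with the spurious $\alpha\int_0^t \mathrm{e}^{\alpha\tau}\|\phi_{xx}\|_{L^2}^2$ controlled for $\alpha\ll1$ since the first equation of \eqref{phipsi equ} bounds $\|\phi_{xx}\|_{L^2}$ by $\|\phi_t\|_{L^2}$ plus lower-order terms, already weighted-integrable; and the $\int_0^t \mathrm{e}^{\alpha\tau}\|\phi_{xxx}\|_{L^2}^2$ part of \eqref{H2 eistimate} follows from the $x$-differentiated equation, which expresses $D(U)\phi_{xxx}$ through $\phi_{xt}$ and now-controlled lower-order quantities. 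The main obstacle I anticipate is the set of boundary terms (denoted $\mathrm{(b.t.)}$ above) generated by the two integrations by parts at this order, because $\phi_x$ and $\phi_{xx}$ are not prescribed to vanish at $x=0,L$; I would control them by differentiating the Dirichlet conditions $\phi(0,t)=\phi(L,t)=0$ in time and by evaluating the first equation of \eqref{phipsi equ} at the endpoints, where $\psi_x(0,t)=0$ pins down the boundary value of $\phi_{xx}$, so that these traces do not destroy the sign of the dissipation. Keeping careful track of these boundary contributions is the delicate point of the argument.
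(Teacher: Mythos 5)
Your treatment of $\psi_{xx}$ (algebraically from the second equation of \eqref{phipsi equ}) and your endgame (absorb the leftover $C\sqrt{\delta_1}\int_0^t\mathrm{e}^{\alpha\tau}\|\phi_{x\tau}\|_{L^2}^2$ from \eqref{H^1 estimate} into a newly produced $\phi_{xt}$-dissipation, then recover $\phi_{xxx}$ from the $x$-differentiated equation used algebraically) match the paper. But the core step is genuinely different: the paper differentiates \eqref{phipsi equ} in \emph{time} and tests with $\phi_t$, obtaining $\frac{\mathrm{d}}{\mathrm{d}t}\int\phi_t^2+\int D(U)\phi_{xt}^2\leq C\int(\phi_t^2+\psi_t^2)$; since $\phi(0,t)=\phi(L,t)=0$ for all $t$ gives $\phi_t=0$ at both endpoints, every integration by parts is boundary-term free, and $\phi_{xx}$ is then recovered pointwise in $t$ from the equation itself via $\|\phi_{xx}\|_{L^2}\leq C(\|\phi_t\|_{L^2}+\|\phi_x\|_{L^2}+\|\psi_x\|_{L^2})$. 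You instead differentiate in $x$ and test with $\phi_{xt}/D(U)$, and this is where your argument has a genuine gap.

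The obstruction is exactly the boundary term you flag: integrating $\int\phi_{xxx}\phi_{xt}$ by parts leaves $\bigl[\phi_{xx}\phi_{xt}\bigr]_0^L$. At $x=0$ your fix works: there $\phi_t(0,t)=0$, $\psi_x(0,t)=0$, and $U^\prime(0)=V^\prime(0)=0$, so evaluating the first equation of \eqref{phipsi equ} at $x=0$ forces $D(\phi_x+U)\phi_{xx}(0,t)=0$, hence $\phi_{xx}(0,t)=0$. At $x=L$, however, $U^\prime(L)$, $V^\prime(L)$ and $\psi_x(L,t)$ are all nonzero in general, so the same evaluation only yields $\phi_{xx}(L,t)\sim \chi S(\phi_x+U)\psi_x(L,t)/D(\phi_x+U)+\cdots$, which is a nontrivial trace; the boundary contribution $\phi_{xx}(L,t)\,\phi_{xt}(L,t)$ then contains the trace of the very quantity you are trying to dissipate, and $|\phi_{xt}(L,t)|$ is not controlled by $\|\phi_{xt}\|_{L^2}$ alone (a trace or Gagliardo--Nirenberg bound would demand $\phi_{xxt}$, which is not in your energy). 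Nothing in the available boundary conditions makes this term vanish or gives it a sign, so as written the estimate does not close. The remedy is the paper's: obtain the $\phi_{xt}$-dissipation from the $t$-differentiated equation tested against $\phi_t$ (whose traces vanish), and get $\|\phi_{xx}\|_{L^2}$ and $\|\phi_{xxx}\|_{L^2(\mathrm{d}\tau)}$ afterwards by purely algebraic manipulation of the equation and its $x$-derivative, with no further integration by parts.
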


\begin{proof}
	Differentiating \eqref{phipsi equ} with respect to $t$, we get
	\begin{equation}\label{phittpsitt}
		\begin{cases}
			\phi_{tt}=&D(U)\phi_{xxt}
			+\left(D'\left( \phi_x+U\right)U^\prime-\chi S'(U)V^\prime\right)\phi_{xt}+\left( D\left( \phi_x+U\right)-D(U)\right)\phi_{xxt}\\&+D'\left( \phi_x+U\right)\phi_{x t}\phi_{xx}-\chi S\left( \phi_x+U\right)\psi_{ xt}-\chi S'\left( \phi_x+U\right)\phi_{ xt}\psi_x\\&-\chi\left(S'\left( \phi_x+U\right)-S'(U)\right)V^\prime\phi_{ xt},\\
			\psi_{t t}=&\psi_{x x t}-U \psi_t-V \phi_{x t}-\phi_{x t} \psi-\phi_x \psi_t.
		\end{cases}
	\end{equation}
	Multiplying the first equation of \eqref{phittpsitt} by $\phi_t$ and integrating the equation on $(0,L)$, we get
	\begin{equation}\label{eq13}
		\begin{aligned}
			\frac{1}{2}\frac{\mathrm{d}}{\mathrm{d}t}\int \phi_{t}^2+\int D(U)\phi_{xt}^2&=-\chi\int S\left( \phi_x+U\right) \psi_{ xt}\phi_{t}-\chi\int S'\left( \phi_x+U\right)\phi_{ xt}\psi_x\phi_{t}\\&\quad-\chi\int S'(U)V^\prime\phi_{xt}\phi_{t}-\int\left( D\left( \phi_x+U\right)-D(U)\right)\phi_{xt}^2\\&\quad-\chi\int\left(S'\left( \phi_x+U\right)-S'(U)\right)V^\prime\phi_{ xt}\phi_{t}.
		\end{aligned}	
	\end{equation}
	We now estimate the terms on the right hand side of \eqref{eq13}. After integrating by parts, we have
	\begin{equation*}
		\begin{aligned}
			&-\chi\int S\left( \phi_x+U\right)\psi_{ xt}\phi_{t}\\&=\chi\int S\left( \phi_x+U\right)\psi_{t}\phi_{xt}+\chi\int S'\left( \phi_x+U\right)U^\prime\phi_{t}\psi_{t}+\chi\int S'\left( \phi_x+U\right)\phi_{xx}\phi_{t}\psi_{t}.
		\end{aligned}
	\end{equation*} Because $D(U)$ has a positive lower bound, by Young's inequality, we have
	\[\chi\left|\int S\left( \phi_x+U\right)\psi_{t}\phi_{xt}\right|\leq\frac{1}{4}\int D(U)\phi_{xt}^2+C\int\psi_t^2,\]
	and
	\[\chi\left|\int S'\left( \phi_x+U\right)U^\prime\phi_{t}\psi_{t}\right|\leq C\int(\phi_t^2+\psi_t^2).\]
	By virtue of the fact that $\left\|\phi_{xx}\right\|_{L^{2}} \leq C\sqrt{\delta_1}$, it holds that
	\begin{equation}\nonumber
		\begin{aligned}
			\chi\left|\int S'\left( \phi_x+U\right)\phi_{xx}\phi_{t}\psi_{t}\right|	&\leq C	\left\|\phi_{t}\right\|_{L^{\infty}}\left\|\phi_{xx}\right\|_{L^{2}}\left\|\psi_{t}\right\|_{L^{2}}\\&\leq C \sqrt{\delta_1}\left\|\phi_{xt}\right\|_{L^{2}}\left\|\psi_{t}\right\|_{L^{2}}\\&\leq\sqrt{\delta_1}\int \phi_{xt}^2+C\sqrt{\delta_1}\int \psi_{t}^2.
		\end{aligned}
	\end{equation}
	Thus,
	\begin{equation}\label{eq14}
		\begin{split}\chi\left|\int S\left( \phi_x+U\right) \psi_{ xt}\phi_{t}\right|\leq\frac{1}{4}\int D(U)\phi_{xt}^2+\sqrt{\delta_1}\int\phi_{xt}^2+C\int \left(\phi_{t}^2+\psi_{t}^2 \right).\end{split}	
	\end{equation}
	Utilizing $\left\|\psi_{x}(\cdot,t)\right\|_{L^{2}} \leq C\sqrt{\delta_1}$ and Sobolev inequality, the second term on the right hand side of \eqref{eq13} gives rise to
	\begin{equation}\label{eq16}
		\begin{aligned}
			\chi\left|\int S'\left(\phi_x+U\right)\phi_{xt}\psi_{x}\phi_{t}\right|\leq C \left\|\phi_{t}\right\|_{L^{\infty}}\left\|\psi_{x}\right\|_{L^{2}}\left\|\phi_{xt}\right\|_{L^{2}}\leq C \sqrt{\delta_1}\int \phi_{xt}^2.\end{aligned}
	\end{equation}
	By \eqref{V_xU_xV_xx,U_xx} and Cauchy-Schwarz inequality, we have
	\begin{equation}\label{eq15}
		\chi \left|\int S'(U)V^\prime\phi_{xt}\phi_{t}\right|\leq \frac{1}{4} \int D(U)\phi_{xt}^2+C\int \phi_{t}^2.		
	\end{equation}
	Thanks to $\left\|\phi_{x}\right\|_{L^{\infty}} \leq C\sqrt{\delta_1}$, by Taylor's expansion and the Cauchy-Schwarz inequality, we get
	\begin{equation}\label{4.43}
		\begin{split}&\left|\int\left( D\left( \phi_x+U\right)-D(U)\right)\phi_{xt}^2\right|+\chi\left|\int\left(S'\left( \phi_x+U\right)-S'(U)\right)V^\prime\phi_{ xt}\phi_{t}\right|\\&\leq \sqrt{\delta_1}\int \phi_{xt}^2+C\sqrt{\delta_1}\int \phi_{t}^2.	\end{split}	
	\end{equation}
	Inserting \eqref{eq14}-\eqref{4.43} into \eqref{eq13} and choosing $\sqrt{\delta_1}$ small enough, we get
	\begin{equation}\label{eq17}
		\frac{\mathrm{d}}{\mathrm{d}t}\int \phi_{t}^2+\int \phi_{xt}^2	\leq C\int \left( \phi_{t}^2+\psi_{t}^2\right).
	\end{equation}
	Multiplying \eqref{eq17} by $\mathrm{e}^{\alpha t}$, and integrating in $t$, by \eqref{H^1 estimate}, we then obtain
	\begin{equation}\label{phit}
		\mathrm{e}^{\alpha t}\int \phi_{t}^2+\int_{0}^{t}\mathrm{e}^{\alpha \tau}\int \phi_{x\tau}^2\leq C \left(\left\|\phi_0\right\|_{H^2}^2+\left\|\psi_0\right\|_{H^1}^2\right), 	
	\end{equation}
	where we have used
	\begin{equation}\nonumber
		\begin{aligned}
			\phi_t|_{t=0}&=D(U) \phi_{0x x}+\left(D'(U)U^\prime-\chi S'(U)V^\prime\right)\phi_{0x}-\chi S(U)\psi_{0x}\\&\quad+\left( D\left( \phi_{0x}+U\right)-D(U)\right)\phi_{0xx}+\left(D\left( \phi_{0x}+U\right)-D(U)-D'(U)\phi_{0 x}\right)U^\prime\\&\quad-\chi\left(S\left( \phi_{0x}+U\right)-S(U) \right)\psi_{0 x}-\chi\left(S\left( \phi_{0x}+U\right)-S(U)-S'(U)\phi_{0 x}\right)V^\prime.
		\end{aligned}
	\end{equation}
	Adding \eqref{phit} with \eqref{L2 estimate} and \eqref{H^1 estimate}, we then arrive at
	\begin{equation}\label{together}
		\begin{aligned}
			&\mathrm{e}^{\alpha t}(\|\phi(\cdot, t)\|_{H^1}^2+\|\psi(\cdot, t)\|_{H^1}^2+\|\phi_t(\cdot, t)\|_{L^2}^2)\\&\quad+\int_0^t\mathrm{e}^{\alpha \tau}\left(\left\|\phi_x\right\|_{L^2}^2+\left\|\psi_x\right\|_{L^2}^2+\left\|\phi_\tau\right\|_{H^1}^2+\left\|\psi_\tau\right\|_{L^2}^2\right) \mathrm{d} \tau\leq C\left(\left\|\phi_0\right\|_{H^2}^2+\left\|\psi_0\right\|_{H^1}^2\right).
		\end{aligned}	
	\end{equation}
	On the other hand, since $D(U)$ has a positive lower bound, the first equation of \eqref{phipsi equ} gives
	\[\begin{split}
		\phi_{x x}^2&\leq C(\phi_{t}^2+\phi_{x}^2+\psi_{x}^2)+C(D\left( \phi_x+U\right)-D(U))^2\phi_{xx}^2+C(D\left( \phi_x+U\right)-D(U)-D'(U)\phi_{ x})^2\\&\quad+C(S\left( \phi_x+U\right)-S(U))^2\psi_{x}^2+C(S\left( \phi_x+U\right)-S(U)-S'(U)\phi_{ x})^2.\end{split}\]
	Thanks to $\left\|\phi_{x}\right\|_{L^{\infty}} \leq C\sqrt{\delta_1}$, if $\delta_1\ll1$, it follows from \eqref{new-4.19}-\eqref{new-4.20} that
	\begin{equation}\nonumber
		\begin{aligned}
			\int \phi_{x x}^2\leq C\int \left( \phi_{t}^2+\phi_{x}^2+\psi_{x}^2\right)+C\delta_1 \int \phi_{xx}^2 +C\delta_1 \int \left(\phi_{x}^2+\psi_{x}^2 \right).
		\end{aligned}
	\end{equation}
	Hence
	\begin{equation}\nonumber
		\int \phi_{x x}^2\leq C	\int \left( \phi_{t}^2+\phi_{x}^2+\psi_{x}^2\right).
	\end{equation}
	This together with \eqref{together} yields that
	\begin{equation}\label{phixx}
		\mathrm{e}^{\alpha t}\|\phi_{x x}\|_{L^2}^2\leq C\left(\left\|\phi_0\right\|_{H^2}^2+\left\|\psi_0\right\|_{H^1}^2\right),
	\end{equation}
	and that
	\begin{equation}\label{int_0^t phixx}
		\int_{0}^{t}\mathrm{e}^{\alpha \tau}\int \phi_{x x}^2\leq C	\int_{0}^{t}\mathrm{e}^{\alpha \tau}\int \left( \phi_{\tau}^2+\phi_{x}^2+\psi_{x}^2\right)\leq C\left(\left\|\phi_0\right\|_{H^2}^2+\left\|\psi_0\right\|_{H^1}^2\right).	\end{equation}
	Similarly, by the second equation of \eqref{phipsi equ}, we get
	\begin{equation}\label{int_0^t psixx}
		\int_{0}^{t}\mathrm{e}^{\alpha \tau}\int \psi_{x x}^2\leq C	\int_{0}^{t}\mathrm{e}^{\alpha \tau}\int \left( \psi_{\tau}^2+\psi^2+\phi_{x}^2\right)\leq C\left(\left\|\phi_0\right\|_{H^2}^2+\left\|\psi_0\right\|_{H^1}^2\right).	
	\end{equation}

	It remains to estimate $\int_{0}^{t}\mathrm{e}^{\alpha \tau}\int\phi_{xxx}^2$. Differentiating \eqref{phipsi equ} with respect to $x$, in view of the fact that $\|\phi_{x}(\cdot,t)\|_{L^{\infty}}\leq C \sqrt{\delta_1}$ and the boundedness of $(U,V)$, we obtain
	\begin{equation}\label{eq21}
		\begin{aligned}
			\int_{0}^{t}\mathrm{e}^{\alpha \tau}\int \phi_{xxx}^2&\leq C\int_{0}^{t}\mathrm{e}^{\alpha \tau}\int\left(\phi_{xt}^2+\phi_{xx}^2+\phi_{x}^2+\psi_{xx}^2+\psi_{x}^2 \right)\\&\quad+C\int_{0}^{t}\mathrm{e}^{\alpha \tau}\cdot\|\phi_{x}\|_{L^{\infty}}^2\int\left(\phi_{xxx}^2+\phi_{xx}^2+\phi_{ x}^2+\psi_{xx}^2+\psi_{x}^2\right)\\&\quad+C\int_{0}^{t}\mathrm{e}^{\alpha \tau}\int\psi_{x}^2\phi_{xx}^2+C\int_{0}^{t}\mathrm{e}^{\alpha \tau}\int\phi_{xx}^4\\&\leq C\delta_1 \int_{0}^{t}\mathrm{e}^{\alpha \tau}\int \phi_{xxx}^2+C\int_{0}^{t}\mathrm{e}^{\alpha \tau}\int\left(\phi_{x}^2+\psi_{x}^2 +\phi_{xx}^2+\psi_{xx}^2+\phi_{xt}^2\right)\\&\quad+C\int_{0}^{t}\mathrm{e}^{\alpha \tau}\int\psi_{x}^2\phi_{xx}^2+C\int_{0}^{t}\mathrm{e}^{\alpha \tau}\int\phi_{xx}^4.
		\end{aligned}
	\end{equation}
	By virtue of $\|\phi_{xx}(\cdot,t)\|_{L^{2}}\leq C \sqrt{\delta_1}$, $\|\psi_{x}(\cdot,t)\|_{L^{2}}\leq C \sqrt{\delta_1}$ and the following Sobolev inequality $$\|f\|_{L^{\infty}} \leq C\left(\|f\|_{L^2}^{\frac{1}{2}}\left\|f_x\right\|_{L^2}^{\frac{1}{2}}+\|f\|_{L^2}\right) \ \text{ for any } f\in H^1,$$ it holds that
	\begin{equation}\nonumber
		\begin{aligned}
			\int_{0}^{t}\mathrm{e}^{\alpha \tau}\int\psi_{x}^2\phi_{xx}^2+\int_{0}^{t}\mathrm{e}^{\alpha \tau}\int\phi_{xx}^4&\leq	\int_{0}^{t}\mathrm{e}^{\alpha \tau}\cdot\|\phi_{xx}\|_{L^{\infty}}^2\left(\|\phi_{xx}\|_{L^{2}}^2+\|\psi_{x}\|_{L^{2}}^2\right)\\&	\leq C \delta_1\int_{0}^{t}\mathrm{e}^{\alpha \tau} \left(\|\phi_{xx}\|_{L^{2}}\|\phi_{xxx}\|_{L^{2}} +\|\phi_{xx}\|_{L^{2}}^2\right)\\&\leq C\delta_1 \int_{0}^{t}\mathrm{e}^{\alpha \tau}\int \phi_{xxx}^2+C\int_{0}^{t}\mathrm{e}^{\alpha \tau}\int \phi_{xx}^2.
		\end{aligned}
	\end{equation}
	We thus update \eqref{eq21} as
	\begin{equation}\label{int_0^t phixxx}
		\begin{aligned}
			\int_{0}^{t}\mathrm{e}^{\alpha \tau}\int \phi_{xxx}^2\leq C\int_{0}^{t}\mathrm{e}^{\alpha \tau}\int\left(\phi_{x}^2+\psi_{x}^2 +\phi_{xx}^2+\psi_{xx}^2+\phi_{xt}^2\right)\leq C\left(\left\|\phi_0\right\|_{H^2}^2+\left\|\psi_0\right\|_{H^1}^2\right),
		\end{aligned}
	\end{equation}
	where we have used \eqref{together}, \eqref{int_0^t phixx}-\eqref{int_0^t psixx} in the last inequality. Combining \eqref{together}, \eqref{int_0^t phixx}, \eqref{int_0^t psixx} and \eqref{int_0^t phixxx}, we
	then obtain \eqref{H2 eistimate} and finish the proof of Lemma \ref{lemma H2}.
\end{proof}

\begin{proof}[Proof of Proposition \ref{proposition a priori estimates}] The desired estimates \eqref{a priori estimate1}-\eqref{a priori estimate3} follow from Lemmas \ref{lemma L2}-\ref{lemma H2}.
\end{proof}

\subsection{Proof of Theorem \ref{stability theorem}}
Recalling from the proof of Lemma \ref{lemma L2}, we have selected $\delta_{1}$ such that $\delta_{1}\leq\delta_0$, where $\delta_0$ is taken arbitrarily and mentioned in Proposition \ref{pro local existence}. Choosing $\left\|\phi_0\right\|_{H^2}^2+\left\|\psi_0\right\|_{H^1}^2$ small enough such that
	\begin{equation}
		\left\|\phi_0\right\|_{H^2}^2+\left\|\psi_0\right\|_{H^1}^2\leq \frac{\delta_1}{2}\text{ and }C\left( \left\|\phi_0\right\|_{H^2}^2+\left\|\psi_0\right\|_{H^1}^2\right)\leq	 \frac{\delta_1}{2},
	\end{equation}
	which implies $\left\|\phi_0\right\|_{H^2}^2+\left\|\psi_0\right\|_{H^1}^2\leq \epsilon:=\min\left\{\frac{\delta_1}{2},\frac{\delta_1}{2C} \right\}$. Since $\left\|\phi_0\right\|_{H^2}^2+\left\|\psi_0\right\|_{H^1}^2\leq \epsilon\leq \delta_1/2<\delta_0$, then Proposition \ref{pro local existence} guarantees the existence of a unique solution $(\phi,\psi)\in X(0,T_0)$ for the system \eqref{phipsi equ} satisfying
	\begin{equation}\nonumber
		\|\phi(\cdot, t)\|_{H^2}^2+\|\psi(\cdot, t)\|_{H^1}^2 \leq 2\left(\|\phi_0\|_{H^2}^2+\|\psi_0\|_{H^1}^2 \right)\leq2\epsilon\leq2\delta_1/2=\delta_1,
	\end{equation}
	for any $0\leq t\leq T_0$. Subsequently, by employing Proposition \ref{proposition a priori estimates}, we establish that
	\begin{equation}\nonumber
		\|\phi(\cdot, T_0)\|_{H^2}^2+\|\psi(\cdot, T_0)\|_{H^1}^2 \leq C\left(\|\phi_0\|_{H^2}^2+\|\psi_0\|_{H^1}^2 \right)\leq C \epsilon\leq C \delta_{1}/2C=\delta_1/2<\delta_0.
	\end{equation}
	Now considering the system \eqref{phipsi equ} with the \lq\lq initial data\rq\rq at $T_0$, and utilizing Proposition \ref{pro local existence} once again, we obtain a unique solution $\phi\in X(T_0,2T_0)$, eventually on the interval $[0,2T_0]$, satisfying
	\begin{equation}\nonumber
		\|\phi(\cdot, t)\|_{H^2}^2+\|\psi(\cdot, t)\|_{H^1}^2 \leq 2\left( \|\phi(\cdot, T_0)\|_{H^2}^2+\|\psi(\cdot, T_0)\|_{H^1}^2\right)\leq2\delta_1/2=\delta_1,
	\end{equation}
	for any $0\leq t\leq 2T_0$. Then, applying Proposition \ref{proposition a priori estimates} with $T=2T_0$ again, we deduce that
	\begin{equation}\nonumber
		\|\phi(\cdot, 2T_0)\|_{H^2}^2+\|\psi(\cdot, 2T_0)\|_{H^1}^2 \leq C\left(\|\phi_0\|_{H^2}^2+\|\psi_0\|_{H^1}^2 \right)\leq C \epsilon\leq C \delta_{1}/2C=\delta_1/2<\delta_0.
	\end{equation}
	Hence, repeated using the result of local existence and Proposition \ref{proposition a priori estimates} and the standard extension procedure, we obtain the global well-posedness of \eqref{phipsi equ}-\eqref{phipsi boundary condition} in $X(0,\infty)$. Thanks to the decomposition \eqref{decompose},  we conclude that the problem \eqref{original problem}-\eqref{original boundary condition} admits a unique global solution $(u,v)$ satisfying
\begin{equation}\nonumber
	u-U \in C\left([0, \infty) ; H^1\right) \cap L^2\left(0, \infty ; H^2\right), \quad v-V \in C\left([0, \infty) ; H^1\right) \cap L^2\left(0, \infty ; H^2\right).
\end{equation}
In view of \eqref{decompose}, by H\"{o}lder's inequality and Sobolev inequality, we have
\[\|\phi_0\|_{L^2}\leq C\|\phi_0\|_{L^{\infty}}\leq C\|\phi_{0x}\|_{L^2}\leq C\|u_0-U\|_{H^1}.\] It then follows from Proposition \ref{global existence} that
\begin{equation*}
	\|(u-U, v-V)(\cdot, t)\|_{H^1} \leq C\left\|(u_0-U,v_0-V)\right\|_{H^1}\mathrm{e}^{-\frac{\alpha}{2} t}.
\end{equation*}
We thus finish the proof of Theorem \ref{stability theorem}.

\section{Numerical simulations}\label{numerical}
In this section, we carry out some numerical simulations for the system \eqref{original problem}-\eqref{original boundary condition} with the initial condition $0\leq u_0<1$.

We set $q(u)=(1-u)^+$ (where the crowing capacity $K=1$), $\chi=20$, $L=1$ and $b=1$ of \eqref{original problem}-\eqref{original boundary condition}, resulting in the following model:
\begin{equation}\label{transform problem}
	\begin{cases}u_t=\left[\left(q(u)-q^{\prime}(u) u\right) u_x-20 q(u) u v_x\right]_x, & x \in(0,1), \\ v_t=v_{x x}-u v, & x \in(0,1),\\ \left( u,v\right)\left(x_0,0\right)=\left(u_0,v_0\right)(x),& x \in(0,1)\end{cases}
\end{equation}
with boundary conditions
\begin{equation}\label{transform boundary condition}
	\begin{cases}
		u_x(0,t)=0,~\left((q(u)-q^{\prime}(u) u) u_x-20 q(u) u v_x\right)(1,t)=0, \\
		v_x(0,t)=0,~v(1,t)=1.
\end{cases}\end{equation}
Let $u_0(x)=\frac{1}{2}x^2(3 -2x)$ and $v_0(x)=x^2$. Then in view of \eqref{m}, we get $m=\frac{1}{4}$. To numerically solve the problem, we set the computational domain $(x,t)\in[0,1]\times [0,100]$ with mesh $\Delta x=0.005$ and $\Delta t=0.0005$.
\begin{figure}
	\begin{center}
        \includegraphics[width=7cm]{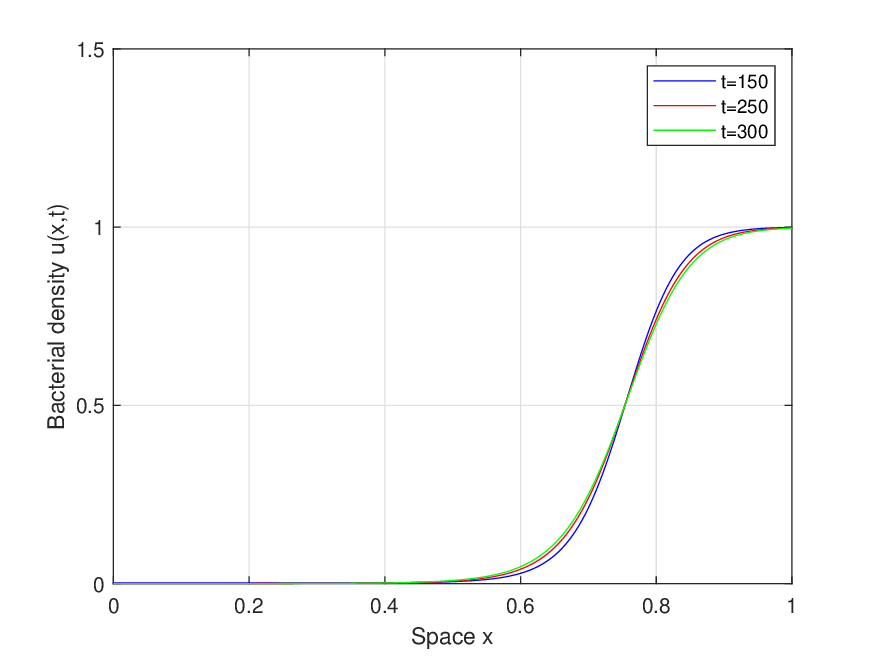}
		\includegraphics[width=7cm]{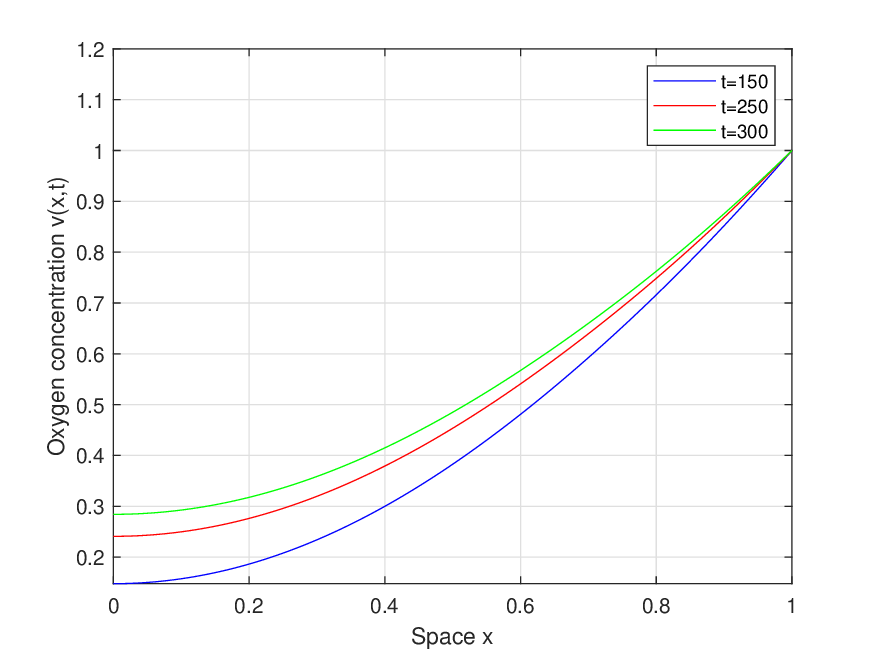}
		\caption{The numerical solution $(u,v)(x,t)$ with $0\leq u_0<1$ at $t=150, 250, 300$, where $q(u)=(1-u)^+$, $\chi=20$, $L=1$ and $b=1$. }
		\label{fig1}
	\end{center}
\end{figure}
Figure \ref{fig1} depicts the solution of system \eqref{transform problem}-\eqref{transform boundary condition} at various time points, revealing a sharp rise of $u(x,t)$ around $x=\frac{3}{4}$, which indicates the convergence to the steady state solution $(U,V)$ as $t\rightarrow \infty$. These numerical simulations illustrate and confirm our theoretical results.

%%%% Acknowledgments %%%%%%%%
\section*{Acknowledgments}
The authors are grateful to the referee for the insightful comments and suggestions, which
lead to great improvements of our original manuscript. This work is supported by the National Natural Science Foundation of China (No. 12371216) and the Natural Science Foundation of Jilin Province (No. 20210101144JC).

%%%% Bibliography  %%%%%%%%%%

\end{document}